\font\sm=msbm7 scaled \magstep 2
\font\germ=eufm10 scaled 1200
\newtheorem{thm}{Theorem}[section]
\newtheorem{prop}[thm]{Proposition}
\newtheorem{cor}[thm]{Corollary}
\newcommand{\thmref}[1]{Theorem~\ref{#1}}
\newcommand{\lemref}[1]{Lemma~\ref{#1}}
\newcommand{\rmkref}[1]{Remark~\ref{#1}}
\newcommand{\propref}[1]{Proposition~\ref{#1}}
\newcommand{\corref}[1]{Corollary~\ref{#1}}
\newcommand{\secref}[1]{Section~\ref{#1}}
\newcommand{\C}{{\mathbb C}}
\theoremstyle{remark}
\newtheorem{rmk}{Remark}[section]
\DeclareMathOperator{\lcm}{lcm}
\begin{document}

\title[Triangular numbers and forms of mixed type]
{On triangular numbers, forms of mixed type and their representation numbers}

\author{B. Ramakrishnan and Lalit Vaishya}
\address{Harish-Chandra Research Institute, HBNI,  
         Chhatnag Road, Jhunsi,
         Prayagraj (Allahabad) - 211 019,
         India.}
\email[B. Ramakrishnan]{ramki@hri.res.in}
\email[Lalit Vaishya]{lalitvaishya@hri.res.in}

\subjclass[2010]{Primary 11E25, 11F11, 11F30; Secondary 11E20}
\keywords{triangular numbers, quadratic forms, modular forms, theta series}

\date{\today}
 
\maketitle

\begin{abstract}
 In \cite{ono}, K. Ono, S. Robins and P.T. Wahl considered the problem of determining formulas for the number of representations of a natural number $n$ by a sum of $k$ triangular numbers and derived many applications, including the one connecting these numbers with the number of representations of $n$ as a sum of $k$ odd square integers. They also obtained an application to the number of lattice points in the $k$-dimensional sphere. 

In this paper, we consider triangular numbers with positive integer coefficients. First we show that if the sum of these coefficients is a multiple of $8$, then the associated generating function gives rise to a modular form of integral weight (when even number of triangular numbers are taken). 
We then use the theory of modular forms to get the representation number formulas corresponding to the triangular numbers with coefficients. 
We also obtain several applications concerning the triangular numbers with coefficients similar to the ones obtained in \cite{ono}. 

In the second part of the paper, we consider more general mixed forms (as done in  Xia-Ma-Tian \cite{xia}) and derive modular properties for the corresponding generating functions associated to these mixed forms. Using our method we deduce all the 21 formulas proved in  \cite[Theorem 1.1]{xia} and show that our method of deriving the 21 formulas together with the $(p,k)$ parametrization of the generating functions of the three mixed forms imply the $(p,k)$ parametrization of the Eisenstein series $E_4(\tau)$ 
and its duplications. It is to be noted that the $(p,k)$ parametrization of $E_4$ and its duplications were derived by a different method by K. S. Williams and his co-authors. 
In the final section, we provide sample formulas for these representation numbers in the case of 4 and 6 variable forms.
\end{abstract}

\section{Introduction and Statement of results}
Finding formulas for the number of representations of a positive integer by certain class of quadratic forms is one of the 
classical problems in number theory, especially formulas for the number of representations of a natural number $n$ by a sum of 
$k$ integer squares, denoted by $r_k(n)$. In this connection, triangular numbers played an important role in finding formulas for 
$r_k(n)$, $8\vert k$ (see \cite{kohnen-imamoglu}). The $n$-th triangular number for a non-negative integer $n$ is given by $T_{n} := n(n+1)/2$. The first few triangular numbers are: $0,1,3,6,10,15,\ldots$. In 1995,  K. Ono, S. Robins and P.T. Wahl \cite{ono} considered the problem of determining formulas for the number of representations of a natural number $n$ by $k$ triangular numbers, denoted as $\delta_k(n)$. When $2\vert k$, 
they used the theory of modular forms of integral weight to get formulas for $\delta_k(n)$. In their work, they have also given several applications of the function $\delta_k(n)$.  One of them is the formula for $\delta_{24}(n)$,  involving the Ramanujan Tau function 
$\tau(n)$.  For general $k$, they related $\delta_k(n)$ with the number of representations of $n$ as a sum of $k$ odd square integers. Further, they also obtained applications to  the number of lattice points inside  $k$-dimensional sphere. In recent years there are many works which consider the problem of finding formulas for the representation numbers for some (known) quadratic forms with certain coefficients. We give here a few references to this effect \cite{{ramanujan}, {aalw}, {aalw1}, {w}, {apw}, {xia}, {rss-octonary}}. In a similar vein, in this paper, we consider triangular numbers with coefficients and obtain a criterion (which depends only on the coefficients) in order that the associated generating function gives rise to a modular form. 
Like in the work of Ono et. al, we also derive some applications related to the triangular numbers with coefficients. 
For the purpose of explicit formulas, we compute explicit bases for the vector space of modular forms of weight $k  = 2,3$  (in these cases we choose the coefficients such that the level of the space of modular forms is a divisor of $24$) and give sample formulas for the number of representations of $n$ by $2k$ triangular numbers with coefficients $1,2,3,4,6$. 

In the second part of this paper, following the work of E. X. W. Xia, Y. H. Ma and L. X. Tian \cite{xia}, we consider mixed forms 
and find their corresponding representation numbers. More specifically, in \cite{xia}, they consider the following type of forms in $2m$ variables, which is given as follows.  
\begin{equation}\label{mixed}
{\mathcal M}({\bf x}, {\bf y}, {\bf z}) :=  \sum_{i=1}^u a_i (x_{2i-1}^2 + x_{2i-1}x_{2i}+ x_{2i}^2) + \sum_{i=1}^v b_i y_i^2  +
\sum_{i=1}^k c_i T_{z_i}
\end{equation}
where ${\bf x} := (x_{1},x_{2}, \ldots , x_{2u})\in {\mathbb Z}^{2u}$, ${\bf y} := (y_{1}, y_{2}, \ldots , y_{v}) \in {\mathbb Z}^v$, 
${\bf z} := (z_{1}, z_{2}, \ldots , z_{k})\in {\mathbb N}_0^k$, ${\mathbb N}_0 = {\mathbb N} \cup \{0\}$. Further,  $2m = 
2u + v+ k$, $u,v,k$ are non-negative integers  such that $v+ k$ is even and the coefficients $a_i\in \{1,2,4,8\}$, $b_i\in \{1,2,3,6\}$ and $c_i\in \{1,2,3,4,6\}$. The above is a mixed form consisting of quadratic forms of type $m^2+mn+n^2$,  squares and triangular numbers with coefficients. 
 Though they defined a more general type of mixed forms ${\mathcal M}({\bf x}, {\bf y}, {\bf z})$, with coefficients $a_i$, $b_i$ and $c_i$ as above, in their work, they actually deal with 21 mixed forms (and obtained the corresponding representation formulas) with $m=4$ (i.e., 8 variable forms).  
In the second part, we consider forms of type \eqref{mixed} (all possible mixed forms) and show that the corresponding generating function is a modular form of weight $m/2$ with some level and character (depending on the coefficients $a_{i}, b_i$ and $c_{i}$). This enables us to use the 
theory of modular forms to get the required formulas corresponding to the mixed forms, including the 21 formulas obtained in \cite{xia}.
We also remark that in \cite{lam}, a few cases of triangular forms with coefficients and mixed forms consisting of squares and triangular numbers with some coefficients have been studied and formulas were obtained using different methods. 

\smallskip 

Before stating our main results, we make the following observation. In \cite{xia}, the $(p,k)$- parametrizations of $E_4(\tau)$, $\eta(\tau)$ (and their 
duplications) and certain theta function identities are used to find the required formulas for the 21 mixed forms. We prove that our 
formulas for the 21 mixed forms (which uses the theory of modular forms) along with the $(p,k)$-parametrizations of the three generating functions 
of the three forms appearing in  ${\mathcal M}({\bf x}, {\bf y}, {\bf z})$ imply the $(p,k)$-parametrizations of $E_4(\tau)$ (and its duplications). 
The $(p,k)$ parametrizations of $E_4(d\tau)$, $d\vert 12$, were obtained by a different method in \cite{{aw}, {aaw}}. 
A detailed presentation the above discussion appears in \S 5.1.

\smallskip

\noindent 
We are now ready to state our main results. Let us first fix some notations. \\

Let $\Psi(\tau)$ denote the generating function for the triangular numbers $T_n$. i.e., 
\begin{equation}\label{gen}
\Psi(\tau) := \sum_{n= 0}^{\infty} q^{T_{n}} = 1 + q + q^2 + q^3 + q^6 + q^{10} + \cdots, 
\end{equation}
where $q = e^{2 \pi i\tau}$ and so the generating function for $\delta_{k}(n)$ (representation number for the sum of $k$-triangular numbers ) is given by 
\begin{equation}
\Psi^{k} (\tau) = \sum_{n \ge 0} \delta_{k}(n) q^{n}.
\end{equation}
In our first theorem we consider the sum of $k$ triangular numbers with coefficients $c_{1}, c_{2}, \ldots, c_{k} \in {\mathbb N}_0$, which is 
given by 
\begin{equation}\label{triang}
T_{\mathcal C}({\bf z}) := \sum_{i=1}^{k} c_{i} \frac{z_{i}(z_{i}+1)}{2},
\end{equation}
where ${\bf z}$ is as above and ${\mathcal C}$ denotes the set of $k$-tuples $(c_1, c_2, \ldots, c_k)$, $c_i\le c_{i+1}$. 
Let us denote by $\delta_k({\mathcal C};n)$ as the number of representations of an integer $n\ge 0$ by $T_{\mathcal C}({\bf z})$. 
i.e.,
\begin{equation}
\delta_k({\mathcal C};n) =  \# \{(z_{1}, z_{2}, \ldots, z_{k}) \in {\mathbb N}_0^k \mid n = T_{\mathcal C}({\bf z}) \}.
\end{equation}
The generating function for $T_{\mathcal C}({\bf z})$,  the sum of $k$ -triangular numbers with coefficients $c_i$  is denoted by $\Psi_{\mathcal C}(\tau)$ and is given by
\begin{equation}\label{gen-coeff}
\Psi_{\mathcal C}(\tau) = \prod_{i=1}^k \Psi(c_i \tau) = \sum_{n=0}^\infty \delta_k({\mathcal C};n) q^n.
\end{equation}
Note that when all $c_i=1$, then  $\Psi_{\mathcal C}(\tau) = \Psi^k(\tau)$  and $\delta_k({\mathcal C};n) = \delta_k(n)$.

\smallskip

Let ${\bf h}:= c_1 + c_2 + \cdots + c_k$. Our first result shows that when ${\bf h}$ is a multiple of $8$, then $q^{{\bf h}/8} 
\Psi_{\mathcal C}(\tau)$ is a modular form of weight $k/2$.

\begin{thm}\label{modular}
For an even integer $k\ge 2$, let ${\bf h}$ be defined as above and set $N = 2\cdot lcm(c_1, c_2, \dots, c_k)$, ${\bf c} = c_1\cdot c_2 \cdots c_k$. Then, $q^{{\bf h}/8} \Psi_{\mathcal C}(\tau)$ is a modular form of weight $k/2$ on $\Gamma_0(N)$ with character 
$\chi = \left(\frac{(-1)^{k/2} 4{\bf c}}{\cdot}\right)$ if, and only if,  ${\bf h} \equiv 0 \pmod{8}$.
\end{thm}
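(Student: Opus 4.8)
The plan is to express $q^{{\bf h}/8}\Psi_{\mathcal C}(\tau)$ as an eta quotient and then apply the standard modularity criteria for eta quotients (Ligozat/Newman--Gordon). The starting point is Gauss's classical product identity
\[
\Psi(\tau)=\sum_{n\ge 0}q^{n(n+1)/2}=\prod_{n\ge 1}\frac{1-q^{2n}}{1-q^{2n-1}}=\frac{(q^2;q^2)_\infty^2}{(q;q)_\infty}.
\]
Inserting $\eta(\tau)=q^{1/24}(q;q)_\infty$ and $\eta(2\tau)=q^{1/12}(q^2;q^2)_\infty$ gives $q^{1/8}\Psi(\tau)=\eta(2\tau)^2/\eta(\tau)$. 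Replacing $\tau\mapsto c_i\tau$ and taking the product over $i$ yields the key identity
\[
q^{{\bf h}/8}\,\Psi_{\mathcal C}(\tau)=\prod_{i=1}^{k}\frac{\eta(2c_i\tau)^2}{\eta(c_i\tau)},
\]
an eta quotient $\prod_{\delta\mid N}\eta(\delta\tau)^{r_\delta}$ supported on divisors of $N=2\,\lcm(c_1,\dots,c_k)$, because each $c_i$ and $2c_i$ divides $N$.

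For the ``if'' direction I would check the hypotheses of Ligozat's theorem. The weight equals $\tfrac12\sum_\delta r_\delta=\tfrac12\sum_i(2-1)=k/2$, an integer since $k$ is even. The first congruence is $\sum_\delta\delta r_\delta=\sum_i(2\cdot 2c_i-c_i)=3{\bf h}\equiv 0\pmod{24}$, which holds exactly when ${\bf h}\equiv 0\pmod 8$; this is the arithmetic origin of the stated condition. The companion congruence $\sum_\delta(N/\delta)r_\delta\equiv 0\pmod{24}$ is automatic, since $\sum_i\!\big(2\cdot\tfrac{N}{2c_i}-\tfrac{N}{c_i}\big)=0$. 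The resulting character is $\big(\tfrac{(-1)^{k/2}s}{\cdot}\big)$ with $s=\prod_\delta\delta^{r_\delta}=\prod_i(2c_i)^2/c_i=4^{k}{\bf c}$; as $4^{k}/4=4^{k-1}$ is a perfect square and $d$ is coprime to the even modulus $N$ (hence odd), $4^{k}$ may be replaced by $4$, giving $\chi=\big(\tfrac{(-1)^{k/2}4{\bf c}}{\cdot}\big)$.

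It remains to prove holomorphy, and this is the step that looks like the main obstacle. An eta quotient is holomorphic and nonvanishing on $\mathbb{H}$, so only the cusps matter; by Ligozat's order formula, the order at a cusp of denominator $d\mid N$ is a positive multiple of $\sum_{\delta\mid N}\gcd(d,\delta)^2 r_\delta/\delta$. Grouping this sum by $i$, the $i$-th factor contributes $\tfrac{1}{c_i}\big(\gcd(d,2c_i)^2-\gcd(d,c_i)^2\big)\ge 0$, the inequality following from $\gcd(d,c_i)\mid\gcd(d,2c_i)$. Thus each summand is individually nonnegative---no delicate cancellation analysis is needed---so every cusp order is nonnegative and the eta quotient lies in $M_{k/2}(\Gamma_0(N),\chi)$.

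Finally, for the ``only if'' direction, suppose ${\bf h}\not\equiv 0\pmod 8$. By the key identity the expansion of $q^{{\bf h}/8}\Psi_{\mathcal C}(\tau)$ at $\infty$ is $\sum_{n\ge 0}\delta_k({\mathcal C};n)q^{\,n+{\bf h}/8}$, with non-integral exponents. Since $\left(\begin{smallmatrix}1&1\\0&1\end{smallmatrix}\right)\in\Gamma_0(N)$ and $\chi$ is trivial on it, every element of $M_{k/2}(\Gamma_0(N),\chi)$ is invariant under $\tau\mapsto\tau+1$ and hence has an integral $q$-expansion at $\infty$; equivalently the eta quotient acquires the factor $e^{\pi i\,{\bf h}/4}\ne 1$ under $\tau\mapsto\tau+1$. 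Either way it cannot be modular on $\Gamma_0(N)$, which proves the equivalence.
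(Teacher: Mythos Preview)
Your proof is correct and follows essentially the same approach as the paper: you express $q^{{\bf h}/8}\Psi_{\mathcal C}(\tau)$ as the eta quotient $\prod_i\eta(2c_i\tau)^2/\eta(c_i\tau)$ and verify the standard modularity criteria (the paper cites this as the theorem of Dummit--Kisilevsky--McKay rather than Ligozat, but it is the same result), with the cusp nonnegativity handled termwise via $\gcd(d,2c_i)\ge\gcd(d,c_i)$. Your treatment of the ``only if'' direction via the non-integral $q$-exponents (equivalently, the failure of $\tau\mapsto\tau+1$ invariance) is in fact more explicit than the paper's, which simply notes that condition (i) forces ${\bf h}\equiv 0\pmod 8$ without separately arguing that failure of this congruence precludes modularity.
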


\smallskip

The mixed form given by \eqref{mixed} has three components which are linear combinations of quadratic forms of type 
$m^{2} + mn + n^{2}$, squares and triangular numbers. For the second part of the paper, we take two sub classes of the above mixed forms which are combinations of triangular numbers with squares and forms of the type $m^{2} + mn + n^{2}$. 
We give a notation for these combinations in the following. 
\begin{equation}\label{st-lt}
\begin{split}
{\mathcal M }_{l, t}({\bf x}, {\bf z}) & = \sum_{i=1}^u a_i (x_{2i-1}^2 + x_{2i-1}x_{2i}+ x_{2i}^2)  + \sum_{i=1}^k c_i T_{z_i}, 
\quad k {\rm ~is~even~},\\
{\mathcal M }_{s, t}({\bf y}, {\bf z}) & =  \sum_{i=1}^v b_i y_i^2  + \sum_{i=1}^k c_i T_{z_i}, \quad v+k {\rm ~is~even~}, 
\end{split}
\end{equation}
with ${\bf x}, {\bf y}, {\bf z}, a_i, b_i, c_i$ as in \eqref{mixed}. 

\smallskip

The generating function for the mixed form ${\mathcal M}_{l,t}({\bf x}, {\bf z})$ is given as follows. 
$$
\psi_{l,t}(\tau) = \prod_{i=1}^u {\mathcal F}(a_i \tau) \prod_{j=1}^{k} \Psi(c_{j}\tau),
$$
where  ${\mathcal F}(\tau) = \sum_{m,n\in {\mathbb Z}} q^{m^2+mn+n^2}$. As $k$ is even, by \thmref{modular}, we know that $q^{{\bf h}/8} \prod_{j=1}^{k} \Psi(c_{j}\tau)$ is a modular form when ${\bf h} = \sum_ic_i$ is a multiple of $8$. Further, ${\mathcal F}(\tau)$ is a modular form in $M_1(\Gamma_0(3), \big(\frac{\cdot}{3}\big))$, which follows from Theorem 4 of \cite{schoen}. Combining these two modular properties we have the following theorem, which gives the modular property of $\psi_{l,t}(\tau)$.

\begin{thm}\label{lt}
Let $\psi_{l,t}(\tau)$ be the generating function for the mixed form ${\mathcal M }_{l, t}({\bf x}, {\bf z})$. Then $q^{{\bf h}/8} \psi_{l,t}(\tau)$ is a 
modular form in $M_{u+k/2}(\Gamma_0(L), \chi')$, where $L = lcm[3~ lcm(a_1, \ldots, a_u), 2~ lcm(c_1, \ldots, c_k)]$ and 
$\chi' = \chi$, if $u$ is even and $\chi' = \big(\frac{\cdot}{3}\big) \chi$, if $u$ is odd. 
\end{thm}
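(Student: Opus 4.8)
The plan is to factor $q^{{\bf h}/8}\psi_{l,t}(\tau)$ into two blocks whose modularity is already at hand, and then multiply them. Writing
$$
q^{{\bf h}/8}\psi_{l,t}(\tau) = \left(\prod_{i=1}^u {\mathcal F}(a_i\tau)\right)\cdot\left(q^{{\bf h}/8}\prod_{j=1}^k \Psi(c_j\tau)\right),
$$
I would treat the two bracketed factors separately. The second factor is exactly the generating function appearing in \thmref{modular} (with the fractional $q$-power attached), so by that theorem it lies in $M_{k/2}(\Gamma_0(N),\chi)$ with $N = 2\,\lcm(c_1,\ldots,c_k)$ and $\chi = \left(\frac{(-1)^{k/2}4{\bf c}}{\cdot}\right)$; here I use that $k$ is even and that ${\bf h}\equiv 0\pmod 8$, which is guaranteed for the forms under consideration.

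For the first factor I would start from the cited fact that ${\mathcal F}(\tau)\in M_1(\Gamma_0(3),(\frac{\cdot}{3}))$ and invoke the standard scaling (level-raising) lemma: if $f\in M_{\kappa}(\Gamma_0(M),\psi)$ and $d$ is a positive integer, then $f(d\tau)\in M_{\kappa}(\Gamma_0(dM),\psi)$, with the nebentypus $\psi$ left unchanged. Applying this with $d=a_i$ gives ${\mathcal F}(a_i\tau)\in M_1(\Gamma_0(3a_i),(\frac{\cdot}{3}))$ for each $i$. Multiplying over $i$, and using that a product of forms sits on the lcm of the levels with the product of the characters (a form of lower level being automatically a form of the larger level $L$ as well, since $\Gamma_0(L)$ is contained in the smaller group), I obtain
$$
\prod_{i=1}^u {\mathcal F}(a_i\tau)\in M_u\!\left(\Gamma_0\big(3\,\lcm(a_1,\ldots,a_u)\big),\,\big(\tfrac{\cdot}{3}\big)^u\right),
$$
where the character $\left(\frac{\cdot}{3}\right)^u$ is trivial when $u$ is even and equals $\left(\frac{\cdot}{3}\right)$ when $u$ is odd.

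Finally I would multiply the two blocks. The product of a form in weight $u$ and a form in weight $k/2$ has weight $u+k/2$; its level is the lcm of the two levels, namely $L = \lcm[3\,\lcm(a_1,\ldots,a_u),\,2\,\lcm(c_1,\ldots,c_k)]$; and its character is the product $\left(\frac{\cdot}{3}\right)^u\chi$. This equals $\chi$ for $u$ even and $\left(\frac{\cdot}{3}\right)\chi$ for $u$ odd, which is precisely $\chi'$, yielding the claimed space $M_{u+k/2}(\Gamma_0(L),\chi')$.

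Since the modularity of each block is either already established (\thmref{modular}) or cited (Theorem 4 of \cite{schoen}), there is no genuinely hard analytic step here; the argument is essentially bookkeeping. The point requiring the most care is the behaviour of the nebentypus: one must confirm that the scaling $\tau\mapsto a_i\tau$ preserves the character $(\frac{\cdot}{3})$ rather than twisting it, that the $u$ copies collapse to a character depending only on the parity of $u$, and that $\chi$ and $(\frac{\cdot}{3})^u$ are genuinely compatible as Dirichlet characters modulo the common level $L$. Verifying that the stated $L$ is exactly the lcm of the two block levels then completes the proof.
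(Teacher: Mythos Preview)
Your proposal is correct and follows essentially the same approach as the paper: the paper's argument (given in the paragraph immediately preceding the theorem statement rather than in the proofs section) likewise factors $q^{{\bf h}/8}\psi_{l,t}(\tau)$ into the triangular block handled by \thmref{modular} and the product of the ${\mathcal F}(a_i\tau)$, invoking ${\mathcal F}\in M_1(\Gamma_0(3),(\tfrac{\cdot}{3}))$ from \cite{schoen}, and then multiplies. Your write-up simply supplies the routine bookkeeping (level-raising under $\tau\mapsto a_i\tau$, lcm of levels, product of characters and its dependence on the parity of $u$) that the paper leaves implicit.
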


Next, we consider the mixed form ${\mathcal M}_{s,t}({\bf y}, {\bf z})$ and denote its generating function by $\psi_{s,t}(\tau)$. Then we have 
 \begin{equation}\label{gen;st}
\psi_{s,t}(\tau) =  \prod_{i=1}^{v} \theta(b_{i}\tau) \prod_{j=1}^{k} \Psi(c_{j}\tau),
\end{equation}
where $\theta(\tau)$ is the classical theta function given by $\theta(\tau) = \sum_{n\in {\mathbb Z}} q^{n^2}$. 

\smallskip

In the next theorem, we shall obtain the modularity of the function $\psi_{s,t}(\tau)$. To derive this, we use the fact that both the generating functions that appear on the right hand-side of \eqref{gen;st} are expressed as eta-quotients. 

\begin{thm}\label{odd-triangular}
	Let $v$ and $k$ be positive integers such that $v+k$ is even and ${\bf h}$ be the sum of the coefficients $c_i$. 
	Set $M = lcm[4~ lcm(b_1, \ldots, b_v), 2~ lcm(c_1, \ldots, c_k)]$. 
		 Then the function $q^{{\bf h}/8} \psi_{s,t}(\tau)$ is a modular form in $M_{(v+k)/2}(\Gamma_0(M), \chi'')$, 
 if, and only if, ${\bf h }\equiv 0 \pmod {8}$,
where  
$$
\chi'' = \begin{cases}
\left(\frac{(-1)^{(v+k)/2} 4 \prod_{i=1}^{v} b_{i}\prod_{j=1}^k c_{j}} {\cdot}\right),& {\rm ~if~} v {\rm ~and~} k {\rm ~are~even}, \\   
\left(\frac{(-1)^{(v+k)/2} 8 \prod_{i=1}^{v} b_{i}\prod_{j=1}^k c_{j}} {\cdot}\right),& {\rm ~if~} v {\rm ~and~} k {\rm ~are~odd}. \\   
\end{cases}
$$  
\end{thm}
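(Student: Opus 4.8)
The plan is to express $q^{{\bf h}/8}\psi_{s,t}(\tau)$ as a single eta-quotient and then to apply the classical criterion of Ligozat governing when an eta-quotient is a holomorphic modular form on $\Gamma_0(M)$. The two building blocks admit the well-known eta-quotient representations
\[
\theta(\tau)=\frac{\eta(2\tau)^5}{\eta(\tau)^2\,\eta(4\tau)^2},
\qquad
\Psi(\tau)=q^{-1/8}\,\frac{\eta(2\tau)^2}{\eta(\tau)},
\]
the first being the standard product formula for $\sum_{n\in{\mathbb Z}} q^{n^2}$ and the second coming from Gauss' identity $\sum_{n\ge0}q^{n(n+1)/2}=\prod_{n\ge1}(1-q^{2n})^2/(1-q^n)$. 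Rescaling $\tau\mapsto b_i\tau$ and $\tau\mapsto c_j\tau$ and using ${\bf h}/8=\sum_j c_j/8$, I would obtain
\[
q^{{\bf h}/8}\psi_{s,t}(\tau)
=\prod_{i=1}^{v}\frac{\eta(2b_i\tau)^5}{\eta(b_i\tau)^2\,\eta(4b_i\tau)^2}\;
\prod_{j=1}^{k}\frac{\eta(2c_j\tau)^2}{\eta(c_j\tau)}
=\prod_{\delta\mid M}\eta(\delta\tau)^{r_\delta},
\]
an honest eta-quotient since $4b_i\mid M$ and $2c_j\mid M$ by the definition of $M$. The necessity of $8\mid{\bf h}$ is then immediate: a holomorphic modular form on $\Gamma_0(M)$ is invariant under $\tau\mapsto\tau+1$ and hence has a $q$-expansion supported on integer exponents, whereas the expansion of $q^{{\bf h}/8}\psi_{s,t}(\tau)$ lies in ${\bf h}/8+{\mathbb Z}_{\ge0}$, which is integral precisely when $8\mid {\bf h}$.

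For the sufficiency I would check the hypotheses of Ligozat's theorem for the exponent system $r_\delta$ above. The total order $\tfrac12\sum_\delta r_\delta$ equals $\tfrac12(v+k)$, since each theta block contributes $5-2-2=1$ and each triangular block contributes $2-1=1$; as $v+k$ is even this is a positive integer, so we are squarely in the integral-weight setting of the criterion and this fixes the weight. The first congruence is the decisive one: each theta block contributes $10b_i-2b_i-8b_i=0$ to $\sum_\delta\delta r_\delta$ while each triangular block contributes $4c_j-c_j=3c_j$, so $\sum_\delta\delta r_\delta=3{\bf h}$, and this is $\equiv0\pmod{24}$ if and only if $8\mid{\bf h}$ — exactly the asserted equivalence. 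The second congruence $\sum_\delta(M/\delta)r_\delta\equiv0\pmod{24}$ holds automatically, because already each theta block gives $\tfrac{M}{b_i}\bigl(\tfrac52-2-\tfrac12\bigr)=0$ and each triangular block gives $\tfrac{M}{c_j}(1-1)=0$, so the whole sum vanishes.

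It remains to read off the character and to verify holomorphy. Ligozat's recipe gives $\chi''=\left(\tfrac{(-1)^{(v+k)/2}s}{\cdot}\right)$ with $s=\prod_\delta\delta^{r_\delta}$; computing block by block, each theta block contributes $(2b_i)^5(b_i)^{-2}(4b_i)^{-2}=2b_i$ and each triangular block contributes $(2c_j)^2(c_j)^{-1}=4c_j$, so $s=2^{\,v+2k}\prod_i b_i\prod_j c_j$. Since $v+k$ is even, $v$ and $k$ have the same parity; when both are even $v+2k$ is even so $2^{v+2k}$ is a square and $s\equiv 4\prod_i b_i\prod_j c_j$ modulo squares, while when both are odd $2^{v+2k}=2\cdot(\text{square})$ and $s\equiv 8\prod_i b_i\prod_j c_j$ modulo squares, reproducing the two cases of $\chi''$.

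The main obstacle, and the only genuinely computational step, is confirming holomorphy at every cusp: using Ligozat's order formula $\operatorname{ord}_{c}=\tfrac{M}{24}\sum_{\delta\mid M}\gcd(c,\delta)^2 r_\delta/(\gcd(c,M/c)\,c\,\delta)$ I would show this quantity is non-negative for each $c\mid M$. I expect this to come out cleanly, since each factor $\theta(b_i\tau)$ and each $\eta(2c_j\tau)^2/\eta(c_j\tau)=q^{c_j/8}\Psi(c_j\tau)$ is itself a holomorphic weight-$\tfrac12$ form with non-negative cusp orders, and cusp orders add under products; moreover, with $M\mid24$ under the standing constraints on the $b_i$ and $c_j$, only finitely many cusps $c\mid M$ need be inspected. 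Assembling the transformation law, the weight, the character, holomorphy at the cusps, and the necessity argument completes the proof.
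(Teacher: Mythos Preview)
Your approach is essentially the same as the paper's: express $q^{{\bf h}/8}\psi_{s,t}$ as an eta-quotient via the standard formulas for $\theta$ and $\Psi$, then check the hypotheses of the Dummit--Kisilevsky--McKay/Ligozat criterion (the paper's Theorem~A), with $\sum_\delta \delta r_\delta = 3{\bf h}$ giving the equivalence with $8\mid{\bf h}$. The one step where you hedge --- non-negativity of the cusp orders --- the paper handles by directly proving the elementary inequality $10\gcd(d,2b)^2 - 8\gcd(d,b)^2 - 2\gcd(d,4b)^2 \ge 0$ for each $\theta$-block via a short $2$-adic case split (together with the already-established positivity for the triangular blocks from the proof of Theorem~\ref{modular}); your alternative, that each factor is itself a holomorphic weight-$\tfrac12$ form so cusp orders are non-negative and add under products, is a valid shortcut.
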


\smallskip

Note that the generating function $\Phi(\tau)$ of the mixed form ${\mathcal M}({\bf x}, {\bf y}, {\bf z})$ is expressed as 
\linebreak 
$\prod_{i=1}^u {\mathcal F}(a_i \tau) \psi_{s,t}(\tau)$. Therefore, the modular property of ${\mathcal F}$ together with \thmref{odd-triangular} give the following result. 

\begin{cor}\label{mixed-m} 
The generating function $\Phi(\tau)$ associated to the mixed form ${\mathcal M}({\bf x}, {\bf y}, {\bf z})$ is a modular form upto a 
rational power of $q$. More precisely, $q^{{\bf h}/8} \Phi(\tau)$ is a modular form in $M_{u+(v+k)/2}(\Gamma_0(N_1), \omega)$, where $N_1 = lcm[3 lcm(a_1, \ldots, a_u), M]$ and $\omega =  \chi''$, if $u$ is even and $\omega = \big(\frac{\cdot}{3}\big) \chi''$, 
if $u$ is odd. Here $M$ and $\chi''$ are as in \thmref{odd-triangular}.
\end{cor}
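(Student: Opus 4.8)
The plan is to exploit the factorization
$$\Phi(\tau) = \prod_{i=1}^u {\mathcal F}(a_i\tau) \cdot \psi_{s,t}(\tau),$$
which is immediate from the definitions of the three generating functions, and to treat the two factors separately. Since $\psi_{s,t}(\tau)$ already absorbs all the triangular and square contributions, the rational power $q^{{\bf h}/8}$ arises entirely from that factor, each ${\mathcal F}(a_i\tau)$ having a $q$-expansion beginning with the constant $1$. Thus it suffices to write $q^{{\bf h}/8}\Phi(\tau) = \prod_{i=1}^u {\mathcal F}(a_i\tau) \cdot \big(q^{{\bf h}/8}\psi_{s,t}(\tau)\big)$ and to recognize the right-hand side as a product of two modular forms, whose weights add, whose levels combine by taking the lcm, and whose characters multiply.

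First I would handle the factor $\prod_{i=1}^u {\mathcal F}(a_i\tau)$. Starting from ${\mathcal F}(\tau) \in M_1(\Gamma_0(3), \big(\frac{\cdot}{3}\big))$ (Theorem 4 of \cite{schoen}), I would invoke the standard level-raising operation $\tau \mapsto a_i\tau$, which sends $M_1(\Gamma_0(3), \big(\frac{\cdot}{3}\big))$ into $M_1(\Gamma_0(3a_i), \big(\frac{\cdot}{3}\big))$; the character is unaffected since its conductor $3$ divides the new level $3a_i$. Taking the product over $i$ then yields
$$\prod_{i=1}^u {\mathcal F}(a_i\tau) \in M_u\Big(\Gamma_0\big(3\, lcm(a_1,\ldots,a_u)\big),\ \big(\tfrac{\cdot}{3}\big)^u\Big),$$
where $\big(\frac{\cdot}{3}\big)^u$ is the trivial character when $u$ is even and equals $\big(\frac{\cdot}{3}\big)$ when $u$ is odd.

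Finally I would combine the two factors. By \thmref{odd-triangular}, using the hypothesis ${\bf h}\equiv 0 \pmod 8$ (which is exactly what makes that factor modular), $q^{{\bf h}/8}\psi_{s,t}(\tau) \in M_{(v+k)/2}(\Gamma_0(M), \chi'')$. Multiplying the two forms adds the weights to $u+(v+k)/2$, takes the level $N_1 = lcm[3\, lcm(a_1,\ldots,a_u), M]$, and multiplies the characters to $\omega = \big(\frac{\cdot}{3}\big)^u\chi''$, which is precisely $\chi''$ for $u$ even and $\big(\frac{\cdot}{3}\big)\chi''$ for $u$ odd, as claimed. The only point requiring care is the bookkeeping of the character: one must check that both $\big(\frac{\cdot}{3}\big)$ and $\chi''$ have moduli dividing $N_1$, which holds because $3 \mid 3\, lcm(a_i) \mid N_1$ and the modulus of $\chi''$ divides $M \mid N_1$, so that $\omega$ is a genuine character modulo $N_1$ and the product indeed lies in the single space $M_{u+(v+k)/2}(\Gamma_0(N_1), \omega)$. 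This verification, rather than any substantive difficulty, is the main thing to attend to; the remainder is a direct consequence of \thmref{odd-triangular} together with the elementary closure of spaces of modular forms under multiplication and level-raising.
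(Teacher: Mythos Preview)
Your proposal is correct and follows exactly the approach the paper takes: the paper simply observes that $\Phi(\tau) = \prod_{i=1}^u {\mathcal F}(a_i\tau)\,\psi_{s,t}(\tau)$ and then invokes the modular property of ${\mathcal F}$ (from \cite{schoen}) together with \thmref{odd-triangular}. You have filled in more of the bookkeeping on levels and characters than the paper does, but the argument is the same.
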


\smallskip

Let $r_{v}({\mathcal B};n)$ denote the number of representations of $n$ by the sum of squares 
$\sum_{i=1}^v b_i y_i^2$, where ${\mathcal B}$ denotes the set of $v$-tuples $(b_1, \ldots, b_v)$. 
When all $b_i=1$, then $r_v({\mathcal B};n)$ is nothing but $r_v(n)$, the number of representations of 
$n$ as a sum of $v$ squares. As mentioned before, $\delta_{k}({\mathcal C};n)$ denotes the number of representations of $n$ by 
the linear combination of triangular numbers $\sum_{i=1}^k c_i T_{z_i}$. To simplify the notation, we will be writing the $v$-tuples and $k$-tuples  in the explicit examples as follows: $(b_1^{\alpha_1}, \ldots, b_ v^{\alpha_v})$, $(c_1^{e_1}, \ldots, c_k^{e_k}), \alpha_i, e_i \ge 0$, $\alpha_1+\cdots + \alpha_v =v$ and $e_1 + \cdots + e_k =k$. If some of the exponents are zero, then the corresponding coefficient is taken to be zero. For a natural number $n$, ${\mathcal N}_{s,t}(b_1^{\alpha_1}, \ldots, b_ v^{\alpha_v}; c_1^{e_1}, \ldots, c_k^{e_k}; n)$
denotes the number of representations of $n$ by the mixed form ${\mathcal M}_{s,t}({\bf y}, {\bf z})$. When all the $b_i$'s and $c_i$'s are equal to $1$, then it is denoted by  ${\mathcal N}_{s,t}(n)$.  Similarly, when all $b_i$'s are $1$, then $r_v({\mathcal B};n) = r_v(n)$ and 
when all $c_i$'s are equal to $1$, then $\delta_k({\mathcal C};n) = \delta_k(n)$.

\smallskip

\begin{rmk}
Let $v+k$ be an even positive integer. Then equation \eqref{gen;st} gives an expression for the generating function $\psi_{s,t}(\tau)$  in terms of the generating functions for the sum of squares with coefficients and the sum of triangular numbers with coefficients. Therefore, using the above notations for their representation numbers, we have 
\begin{equation*}
\begin{split}
\sum_{n \ge 0} {\mathcal N}_{s,t}(b_1^{\alpha_1}, \ldots, b_ v^{\alpha_v}; c_1^{e_1}, \ldots, c_k^{e_k}; n) q^n =   \big(\sum_{n \ge 0} r_{v}({\mathcal B};n)q^n \big) 
\big(\sum_{n \ge 0} \delta_{k}({\mathcal C};n) q^n\big).
\end{split}
\end{equation*}
Now, comparing the coefficient of $q^n$ both the sides, we get for $n \ge 1$,
\begin{equation*}
\begin{split}
{\mathcal N}_{s,t}(b_1^{\alpha_1}, \ldots, b_ v^{\alpha_v}; c_1^{e_1}, \ldots, c_k^{e_k}; n) 
 = \delta_{k}({\mathcal C} ;n)  + r_{v}({\mathcal B};n) + \sum_{m=1}^{n-1} r_{v}({\mathcal B};m)\delta_{k}({\mathcal C};n-m).
\end{split}
\end{equation*}
In other words, for $n \ge 1$, we have the following identity: 
\begin{equation}\label{st-formula}
\begin{split}
\delta_{k}({\mathcal C} ;n)  + r_{v}({\mathcal B};n)  = {\mathcal N}_{s,t}(b_1^{\alpha_1}, \ldots, b_ v^{\alpha_v}; c_1^{e_1}, \ldots, c_k^{e_k}; n) 
 - \sum_{m=1}^{n-1} r_{v}({\mathcal B};m)\delta_{k}({\mathcal C};n-m).
\end{split}
\end{equation}
In the case when all $b_i$'s and $c_i$'s are equal to 1, the above identity becomes   
\begin{equation}\label{st-formula1}
\begin{split}
\delta_{k}(n)+ r_{v}(n) = {\mathcal N}_{s,t}(n)- \sum_{m=1}^{n-1} r_{v}(m)\delta_{k}(n-m).
\end{split}
\end{equation}
Next we use \thmref{odd-triangular} to get some more information about the above representation numbers. Assume that 
${\bf h} = e_1 c_1+ e_2 c_2+\cdots + e_k c_k$ is a positive integer which is a multiple of $8$ and write it as ${\bf h} = 8p$. With this assumption, by \thmref{odd-triangular}, 
the function $q^p \psi_{s,t}(\tau)$ is a modular form of weight $(v+k)/2$ with some level $M$ and character $\chi''$ as in the theorem. Therefore, by using an explicit basis for the 
space of modular forms, it is possible to get a formula for the numbers ${\mathcal N}_{s,t}(b_1^{\alpha_1}, \ldots, b_ v^{\alpha_v}; c_1^{e_1}, \ldots, c_k^{e_k}; n)$, when 
$n\ge p$. Using this formula in \eqref{st-formula}, we see that a formula for $r_{v}({\mathcal B}; n)$ can be obtained recursively by knowing a formula for 
$\delta_{k}({\mathcal C}; n)$ and vice-versa. This is interesting in the case when both $v$ and $k$ are odd (with the condition on ${\bf h}$). 

\end{rmk}

\smallskip 
\bigskip

When $v=k$, we get a relation between the number of representations of a natural number $n$ by the linear combination (with coefficients $c_i$) of $k$ triangular numbers and the number of representations of $n$ as a linear combination (with coefficients 
$c_i$) of $k$ odd integer squares. By taking $v=k$, we let
\begin{equation}\label{qkn}
q_{k}({\mathcal C};n) := \# \{(y_{1}, y_{2}, \ldots, y_{k}) \in {\mathbb Z}^k \vert   n = \sum_{i=1}^k c_i y_i^2, y_{i} \ge 0, ~~~ 2\not\vert y_{i}\}.  
\end{equation}
Then we have the following result for $\delta_{k}({\mathcal C};n)$, which is an analogue of the corresponding result obtained  in \cite{ono} with all $c_i$'s are equal to 1. 

\begin{prop}\label{odd-square}
For a positive integer $n$, we have 
\begin{equation}\label{11}
\delta_{k}({\mathcal C};n)  = q_{k}({\mathcal C}; 8n+{\bf h}),
\end{equation}
where ${\bf h} = c_1 + \cdots + c_k$. In other words, the number of representations of $n$ by $T_{\mathcal C}({\bf z})$ is the same as the number of representations 
of $8n + {\bf h}$ as a sum of $k$ odd integer squares with coefficients $c_i$, $1\le i\le k$. 
In particular, when all $c_{i} =1$, we have $ {\bf h} = k$. So, the above gives
\begin{equation}\label{delta-odd square}
\delta_{k}(n)  = q_{k}(8n+k),
\end{equation}
which is \cite[Proposition 2]{ono}.
\end{prop}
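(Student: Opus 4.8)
The plan is to exhibit an explicit bijection between the two sets being counted, resting entirely on the elementary identity $8T_m + 1 = (2m+1)^2$, valid for every non-negative integer $m$ (since $8\cdot m(m+1)/2 + 1 = 4m^2 + 4m + 1$). This identity is exactly the arithmetic incarnation of the classical correspondence between triangular numbers and odd squares, and the coefficients $c_i$ will simply ride along.

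First I would note that a representation counted by $\delta_k(\mathcal{C};n)$ is a tuple ${\bf z} = (z_1, \ldots, z_k) \in \mathbb{N}_0^k$ with $n = \sum_{i=1}^k c_i T_{z_i}$. Multiplying by $8$, adding ${\bf h} = \sum_i c_i$, and applying the identity termwise gives
$$
8n + {\bf h} = \sum_{i=1}^k c_i\,(8 T_{z_i} + 1) = \sum_{i=1}^k c_i\,(2z_i + 1)^2.
$$
Setting $y_i = 2z_i + 1$, each $y_i$ is a positive odd integer (in particular $y_i \ge 0$ and $2 \nmid y_i$), so the tuple $(y_1, \ldots, y_k)$ is precisely one of the tuples enumerated by $q_k(\mathcal{C}; 8n + {\bf h})$. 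This defines a map from the set counted by $\delta_k(\mathcal{C};n)$ into the set counted by $q_k(\mathcal{C}; 8n + {\bf h})$.

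Next I would verify that this map is a bijection by producing its inverse. Given a tuple $(y_1, \ldots, y_k)$ counted by $q_k(\mathcal{C}; 8n + {\bf h})$, each $y_i$ is an odd non-negative integer, hence $y_i \in \{1, 3, 5, \ldots\}$, so $z_i := (y_i - 1)/2$ is a well-defined element of $\mathbb{N}_0$. Reversing the computation above yields $\sum_i c_i y_i^2 = 8\sum_i c_i T_{z_i} + {\bf h}$, and comparing this with $8n + {\bf h}$ forces $n = \sum_i c_i T_{z_i}$; thus $(z_1, \ldots, z_k)$ is counted by $\delta_k(\mathcal{C};n)$. Since $z_i \mapsto 2z_i + 1$ and $y_i \mapsto (y_i - 1)/2$ are mutually inverse, the correspondence is a bijection, so the two cardinalities coincide, which is \eqref{11}. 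The special case $c_i = 1$ for all $i$ gives ${\bf h} = k$ and recovers \eqref{delta-odd square}.

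The argument presents no genuine obstacle; the only point demanding care is checking that the parity and non-negativity constraints match up exactly, namely that the positive odd integers $2z_i + 1$ produced from $z_i \ge 0$ are precisely the admissible values (odd and non-negative) for $y_i$ in the definition of $q_k(\mathcal{C};\,\cdot\,)$, so that both the map and its inverse remain within the prescribed sets.
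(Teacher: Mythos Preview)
Your proof is correct and follows essentially the same approach as the paper: both rest on the identity $8T_m+1=(2m+1)^2$ applied termwise to obtain $8n+{\bf h}=\sum_i c_i(2z_i+1)^2$. You are more explicit than the paper in spelling out the inverse map $y_i\mapsto (y_i-1)/2$ and verifying that the non-negativity and parity constraints match, whereas the paper simply remarks that ``one can trace back these identities.''
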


We  now give the relation between $r_{k}(n)$, $\delta_{k}(n)$, $\delta_{2k}(n)$ by using the following identity (for a proof 
of this identity, we refer to \cite[p.40]{berndt}). 
\begin{equation}\label{psi-theta}
\psi^2(\tau) = \theta(\tau) \psi(2\tau).
\end{equation}
Taking $k$-th power of the above identity and using \eqref{delta-odd square}, we get the following corollary.

\begin{cor}\label{relations}
For a natural number $n$, we have 
\begin{equation}
q_{2k}(8n+2k) ~=~ \delta_{2k}(n) ~=~ 
\sum_{a,b\in {\mathbb N}_{0}\atop{a+ 2b =n}} r_{k}(a)\delta_{k}(b) ~= ~\sum_{a,b\in {\mathbb N}_{0} \atop{a+ 2b =n}} r_{k}(a) q_{k}(8b+k). 
\end{equation} 
\end{cor}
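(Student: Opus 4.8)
The plan is to derive all three displayed equalities from just two ingredients that are already in hand: the theta identity \eqref{psi-theta} and the equality \eqref{delta-odd square} of \propref{odd-square}. The two outer equalities are immediate instances of \propref{odd-square}, so the only substantive point is the middle equality $\delta_{2k}(n) = \sum_{a+2b=n} r_k(a)\delta_k(b)$, which I would extract by comparing $q$-expansions after raising \eqref{psi-theta} to the $k$-th power.

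First I would raise \eqref{psi-theta} to the $k$-th power to obtain $\Psi^{2k}(\tau) = \theta^k(\tau)\,\Psi^k(2\tau)$. Next I would read off the $q$-expansions of the three factors. By \eqref{gen} and the definition of $\delta_k$, the left-hand side is $\Psi^{2k}(\tau) = \sum_{n\ge 0}\delta_{2k}(n)q^n$. On the right, $\theta^k(\tau) = \sum_{a\ge 0} r_k(a)q^a$ is the generating function for sums of $k$ squares, while replacing $\tau$ by $2\tau$ in $\Psi^k(\tau) = \sum_{b\ge 0}\delta_k(b)q^b$ gives $\Psi^k(2\tau) = \sum_{b\ge 0}\delta_k(b)q^{2b}$. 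Multiplying the last two series and collecting the coefficient of $q^n$ yields $\sum_{a+2b=n} r_k(a)\delta_k(b)$; comparing with the left-hand side gives the middle equality.

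For the outer equalities I would invoke \propref{odd-square}. Applying \eqref{delta-odd square} with $2k$ variables in place of $k$ gives $\delta_{2k}(n) = q_{2k}(8n+2k)$, which is the first equality. For the last equality I would apply \eqref{delta-odd square} in the form $\delta_k(b) = q_k(8b+k)$ to each summand of $\sum_{a+2b=n} r_k(a)\delta_k(b)$, turning it into $\sum_{a+2b=n} r_k(a)q_k(8b+k)$. I do not anticipate a genuine obstacle here: the argument is a formal power-series manipulation combined with a direct citation, and the only thing needing care is the bookkeeping of the shift $\tau\mapsto 2\tau$, which doubles the exponent of $q$ attached to $\delta_k(b)$ and hence forces the relation $a+2b=n$ rather than $a+b=n$.
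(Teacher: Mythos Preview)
Your proposal is correct and follows essentially the same approach as the paper: raise the identity \eqref{psi-theta} to the $k$-th power, compare $n$-th Fourier coefficients to obtain the middle equality, and invoke \eqref{delta-odd square} (once with $2k$ and once termwise with $k$) for the two outer equalities. The paper's proof is more terse but the logical structure is identical.
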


\begin{proof}
Taking $k$-th powers of both sides of \eqref{psi-theta} and comparing the respective $n$-th Fourier coefficients ($n\ge 1$), we get, 
\begin{equation*}
\delta_{2k}(n) = \sum_{a,b \in {\mathbb N}_{0}\atop{a+2b = n}} r_k(a) \delta_k(b) ~= \sum_{a,b \in {\mathbb N}_{0}\atop{a+2b = n}} 
r_k(a) q_k(8b+k) \quad ({\rm by~} \eqref{delta-odd square}).
\end{equation*}
Again by \eqref{delta-odd square}, we have $\delta_{2k}(n) = q_{2k}(8n+2k)$. 
\end{proof}

\begin{rmk}
If we consider \corref{relations} for forms with coefficients, then the identity \eqref{psi-theta} for the case of forms with coefficients  ${\mathcal C} = 
(c_1, \ldots, c_k)$ becomes: 
\begin{equation}\label{rmk:general}
\Psi_{\mathcal C}^2(\tau) = \prod_{i=1}^k\theta(c_i\tau) \Psi_{\mathcal C}(2\tau).
\end{equation}
The LHS can be viewed as the generating function of $2k$ triangular numbers with coefficients $(c_1,\ldots, c_k, c_1,\ldots, c_k) =: {\mathcal C}^2$ and the corresponding $n$-th Fourier coefficient is denoted as $\delta_{2k}({\mathcal C}^2;n)$. The number of representations of $n$ as a sum of $k$ squares with coefficients $(c_1,\ldots, c_k)$ is written in our notation as $r_k({\mathcal C};n)$. Therefore, comparing the $n$-th 
Fourier coefficients of \eqref{rmk:general}, we have the following identity: 
\begin{equation}\label{16}
\delta_{2k}({\mathcal C}^2;n) = \sum_{a,b\in {\mathbb N}_{0}\atop{a+ 2b =n}} r_{k}({\mathcal C}; a)\delta_{k}({\mathcal C}; b).
\end{equation}
When we consider $2k$ triangular numbers with coefficients $(c_1,\ldots,c_k,c_1,\ldots,c_k)$, the corresponding result for \propref{odd-square} 
is the following: 
\begin{equation}\label{17}
\delta_{2k}({\mathcal C}^2; n)  = q_{2k}({\mathcal C}^2; 8n+{2 \bf h}),
\end{equation}
where ${\bf h} = c_1+ \ldots + c_k$. Using \eqref{17} and \eqref{11} in \eqref{16}, the analogous result for \corref{relations} in the case of forms with coefficients is 
given by 
\begin{equation}
q_{2k}({\mathcal C}^2; 8n+{2 \bf h}) = \delta_{2k}({\mathcal C}^2; n) = \sum_{a,b\in {\mathbb N}_{0}\atop{a+ 2b =n}} r_{k}({\mathcal C}; a)\delta_{k}({\mathcal C}; b) = \sum_{a,b\in {\mathbb N}_{0}\atop{a+ 2b =n}} r_{k}({\mathcal C}; a) q_{k}({\mathcal C}; 8b + {\bf h}).
\end{equation}
\end{rmk}

\bigskip

We illustrate \corref{relations} with some examples. When $k=4,6,8,12$, and $16$, we use the formula for $\delta_k(n)$ given by Ono et. al in 
\cite[p. 77--81]{ono} to get the following relations among $\delta_k(n)$, $r_k(n)$. 

\begin{cor}\label{relations1}
 For a natural number $n$, we have
\begin{equation*}
\begin{split}
\delta_{4}(n)  = & \frac{1}{4} \sum_{a,b \in {\mathbb N}_{0}\atop{a+2b = n}} r_2(a) r_2(8b+2).\\
\delta_{6}(n)  = & \frac{1}{8} \sum_{a,b \in {\mathbb N}_{0}\atop{a+2b = n}} r_3(a) r_3(8b+3).\\
\delta_{8}(n)  = & \sum_{a,b \in {\mathbb N}_{0}\atop{a+2b = n}} r_4(a) \sigma(2b+1).\\
\delta_{12}(n) = & - \frac{1}{8} \sum_{a,b \in {\mathbb N}_{0}\atop{a+2b = n}} r_6(a) \sigma_{2; {\bf 1},\chi_{-4}}(4b+3).\\
\delta_{16}(n) = & \sum_{a,b \in {\mathbb N}_{0}\atop{a+2b = n}} r_8(a) \sigma_{3}^{\#}(b+1),\\
\end{split}
\end{equation*}
where $\sigma_{k;\chi,\psi}(n)$ is the generalised divisor function defined by \eqref{divisor}, $\chi_{-4}$ is the odd Dirichlet character modulo $4$  
and $\sigma_{3}^{\#}(n) = \displaystyle{\sum_{d \vert n \atop{\frac{n}{d}-odd}}} d^{3}$.
\end{cor}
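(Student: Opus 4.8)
The plan is to obtain all five identities as specializations of \corref{relations}. Applying that corollary with its internal index equal to $2,3,4,6$ and $8$ respectively (so that $2k$ becomes $4,6,8,12$ and $16$) gives, in each case and for $n\ge 1$, the master identity
$$
\delta_{2k}(n) \;=\; \sum_{a,b\in{\mathbb N}_0\atop a+2b=n} r_k(a)\,\delta_k(b) \;=\; \sum_{a,b\in{\mathbb N}_0\atop a+2b=n} r_k(a)\,q_k(8b+k),
$$
the rightmost equality being \eqref{delta-odd square}. Thus it only remains to insert the appropriate closed form for $\delta_k(b)$ (equivalently $q_k(8b+k)$) for $k=2,3,4,6,8$; the outer convolution against $r_k(a)$ then reproduces the stated right-hand side verbatim.

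For $\delta_4$ and $\delta_6$ I would use the rightmost form and convert $q_k(8b+k)$ into $r_k(8b+k)$ by a short congruence argument. Every square is $\equiv 0,1,4\pmod 8$, and $8b+k\equiv k\pmod 8$ with $k=2$ (resp.\ $k=3$); the only way to realize this residue as a sum of $k$ squares modulo $8$ is $1+1$ (resp.\ $1+1+1$), so in every representation each of the $k$ summands is odd, hence nonzero. Consequently each nonnegative solution counted by $q_k$ accounts for exactly $2^k$ signed solutions counted by $r_k$, giving $q_k(8b+k)=2^{-k}r_k(8b+k)$. This produces $\delta_4(n)=\tfrac14\sum_{a+2b=n}r_2(a)r_2(8b+2)$ and $\delta_6(n)=\tfrac18\sum_{a+2b=n}r_3(a)r_3(8b+3)$, the constants $\tfrac14=2^{-2}$ and $\tfrac18=2^{-3}$ being precisely the sign factors.

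For $\delta_8$, $\delta_{12}$ and $\delta_{16}$ I would instead use the middle form $\delta_{2k}(n)=\sum_{a+2b=n}r_k(a)\delta_k(b)$ and substitute the explicit evaluations of $\delta_k$ recorded by Ono, Robins and Wahl in \cite[pp.~77--81]{ono}, namely $\delta_4(b)=\sigma(2b+1)$, $\delta_6(b)=-\tfrac18\,\sigma_{2;{\bf 1},\chi_{-4}}(4b+3)$ and $\delta_8(b)=\sigma_3^{\#}(b+1)$. Inserting these into the master identity with $k=4,6$ and $8$ yields the claimed formulas for $\delta_8(n)$, $\delta_{12}(n)$ and $\delta_{16}(n)$ directly.

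I do not expect a serious obstacle, since the result is a direct corollary of \corref{relations}. The only points needing care are (i) the mod-$8$ forcing in the $k=2,3$ cases, where one must verify that no summand can vanish—guaranteed here because all summands are forced to be odd—so that the sign count is exactly $2^k$ and the constants emerge as $\tfrac14$ and $\tfrac18$; and (ii) transcribing the external evaluations of \cite{ono} with the correct normalization and sign, in particular in the weight-$3$ case $\delta_6$. Both are routine verifications rather than genuine difficulties.
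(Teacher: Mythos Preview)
Your proposal is correct and follows essentially the same route as the paper: apply \corref{relations} and substitute the known closed forms for $\delta_k(b)$ with $k=2,3,4,6,8$. The paper simply cites all five of these from \cite[pp.~77--81]{ono}, including $\delta_2(n)=\tfrac14 r_2(8n+2)$ and $\delta_3(n)=\tfrac18 r_3(8n+3)$, whereas you re-derive the latter two yourself via the $\bmod\ 8$ forcing argument; this is a minor elaboration rather than a different approach, and your argument there is sound.
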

 
\smallskip

\begin{proof}
The following formulas are established in \cite[p. 77--81]{ono}: 
\begin{equation*}
\begin{split}
\delta_{2}(n) ~=~ \frac{1}{4} r_2(8n+2); \quad  \delta_{3}(n) &= \frac{1}{8} r_3(8n+3); \quad \delta_{4}(n) ~=~ \sigma(2b+1),\\
\delta_{6}(n) ~=~ - \frac{1}{8} \sigma_{2;{\bf 1},\chi_{-4}}(4n+3);\quad  \delta_{8}(n) &= \sigma_{3}^{\#}(n+1). 
\end{split}
\end{equation*}
Using these formulas in \corref{relations}, we have the required formulas.
\end{proof}

\begin{rmk}
When $k=4,6,8$, we have the following known formulas for $r_k(n)$, which are given below. 
\begin{equation*}
\begin{split}
r_4(n) &= 8 \sigma(n)- 32 \sigma(n/4),\\
r_6(n) &= -4 \sigma_{2;{\bf 1}, \chi_{-4}}(n) + 16 \sigma_{2; \chi_{-4},{\bf 1}}(n),\\
r_8(n) &= 16 \sigma_{3}(n)- 32 \sigma_{3}(n/2) + 256 \sigma_{3}(n/4). 
\end{split}
\end{equation*}
Among these, $r_2(n)$ and $r_4(n)$ are well-known. To get the above expression for $r_6(n)$, we write $\theta^6(\tau)$ in terms of the 
two Eisenstein series $E_{3;{\bf 1}, \chi_{-4}}(\tau)$ and $E_{3;,\chi_{-4},{\bf 1}}(\tau)$ and it is easy to get the above formula by comparing the 
$n$-th Fourier coefficients. Now, using the above formulas in \corref{relations1}, we obtain the following formulas in terms of only the divisor functions.
For a natural number $n$, we have
\begin{equation*}
 \begin{split}
\delta_{8}(n)  &=  8 \sum_{a,b \in {\mathbb N}_{0}\atop{a+2b = n}} \sigma(a) \sigma(2b+1)-32 \sum_{a,b \in {\mathbb N}_{0}\atop{a+2b = n}} \sigma(a/4) \sigma(2b+1) .\\
\delta_{12}(n) &=   \frac{1}{2} \sum_{a,b \in {\mathbb N}_{0}\atop{a+2b = n}} \sigma_{2;{\bf 1},\chi_{-4}}(a) \sigma_{2;{\bf 1},\chi_{-4}}(4b+3) 
- 2 \sum_{a,b \in {\mathbb N}_{0}\atop{a+2b = n}} \sigma_{2;\chi_{-4},{\bf 1}}(a) \sigma_{2; {\bf 1},\chi_{-4}}(4b+3).\\
\delta_{16}(n) &=  16 \sum_{a,b \in {\mathbb N}_{0}\atop{a+2b = n}} \sigma_{3}(a) \sigma_{3}^{\#}(b+1) - 32 \sum_{a,b \in {\mathbb N}_{0}\atop{a+2b = n}} \sigma_{3}(a/2) \sigma_{3}^{\#}(b+1) + 256 \sum_{a,b \in {\mathbb N}_{0}\atop{a+2b = n}} \sigma_{3}(a/4) \sigma_{3}^{\#}(b+1).\\
\end{split}
\end{equation*}
\end{rmk}

\bigskip

\begin{prop}\label{ellipsoid}
Let $R$ be a positive real number. Then, the $k$-dimensional ellipsoid with axis lengths $R/\sqrt{c_{i}}$, $1\le i\le k$, 
centred at $(1/2,1/2, \ldots, 1/2)$ contains $2^{k} \sum_{n = 1}^{[\frac{R^2}{2}-\frac{{\bf h}}{8}]} 
\delta_{k}({\mathcal C};n)$ lattice points in ${\mathbb Z}^k$.
\end{prop}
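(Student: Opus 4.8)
The plan is to convert the lattice-point count into a count of representations by odd squares with coefficients, and then to apply \propref{odd-square} in order to rephrase the answer in terms of $\delta_k({\mathcal C};\cdot)$. Since the $i$-th semi-axis has length $R/\sqrt{c_i}$ and the centre is $(1/2,\dots,1/2)$, a point ${\bf x}=(x_1,\dots,x_k)$ lies in the (closed) ellipsoid exactly when
\[
\sum_{i=1}^k \frac{(x_i-1/2)^2}{R^2/c_i}\le 1, \qquad\text{equivalently}\qquad \sum_{i=1}^k c_i\,(x_i-1/2)^2\le R^2 .
\]
First I would restrict to $x_i\in{\mathbb Z}$ and substitute $t_i=2x_i-1$. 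As $x_i$ ranges over ${\mathbb Z}$, the integer $t_i$ ranges bijectively over the odd integers, and the inequality becomes $\sum_{i=1}^k c_i t_i^2\le 4R^2$. Hence the lattice points inside the ellipsoid are in bijection with the $k$-tuples $(t_1,\dots,t_k)$ of odd integers satisfying $\sum_i c_i t_i^2\le 4R^2$.

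Next I would separate signs from magnitudes. Writing $y_i=|t_i|$, each admissible tuple $(t_1,\dots,t_k)$ corresponds to a tuple $(y_1,\dots,y_k)$ of positive odd integers together with an independent choice of sign in each coordinate; as no $t_i$ can vanish, this contributes an exact factor $2^k$. Therefore the number of lattice points equals $2^k$ times the number of tuples of positive odd integers with $\sum_i c_i y_i^2\le 4R^2$. Grouping these by the value $m=\sum_i c_i y_i^2$ gives $2^k\sum_m q_k({\mathcal C};m)$. Because each $y_i$ is odd, $y_i^2\equiv 1\pmod 8$, so every representable $m$ satisfies $m\equiv{\bf h}\pmod 8$; writing $m=8n+{\bf h}$ and invoking \propref{odd-square} in the form $q_k({\mathcal C};8n+{\bf h})=\delta_k({\mathcal C};n)$ turns each term into $\delta_k({\mathcal C};n)$.

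Finally I would convert the radial constraint into a range for $n$: from $8n+{\bf h}\le 4R^2$ one gets $n\le \tfrac{R^2}{2}-\tfrac{{\bf h}}{8}$, so $n$ runs over integers up to $\big[\tfrac{R^2}{2}-\tfrac{{\bf h}}{8}\big]$, producing the asserted count $2^k\sum_n \delta_k({\mathcal C};n)$. I do not expect a genuine obstacle here; the work is entirely bookkeeping, and the one place demanding care is the innermost term. The value $n=0$, i.e. $m={\bf h}$, is represented only by $y_1=\dots=y_k=1$, and corresponds to the $2^k$ lattice points with every coordinate in $\{0,1\}$ (those nearest the centre, with $\sum_i c_i(x_i-1/2)^2={\bf h}/4$); tracking whether this term is included, together with the correct interpretation of the floor, is the sole subtle point in matching the stated formula.
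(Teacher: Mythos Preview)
Your argument is correct and essentially parallel to the paper's, just routed through \propref{odd-square}. The paper argues directly: for a lattice point $(z_1,\dots,z_k)$ one expands
\[
\sum_i c_i\Big(z_i-\tfrac12\Big)^2=\sum_i c_i\,z_i(z_i-1)+\frac{{\bf h}}{4},
\]
recognises $\sum_i c_i\frac{z_i(z_i-1)}{2}$ as a value represented by $T_{\mathcal C}$, and uses the two-to-one symmetry $z\leftrightarrow 1-z$ in each coordinate to produce the factor $2^k$. Your substitution $t_i=2x_i-1$ followed by the sign-splitting and the appeal to \propref{odd-square} is exactly the same computation in disguise (indeed the proof of \propref{odd-square} is the identity $8\cdot\frac{z(z+1)}{2}+1=(2z+1)^2$). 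So there is no genuinely different idea, only a different packaging.

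Your closing remark about the $n=0$ term is well taken: the $2^k$ lattice points with all coordinates in $\{0,1\}$ correspond to $n=0$ (with $\delta_k({\mathcal C};0)=1$), and whether the sum should begin at $n=0$ or $n=1$ is indeed the only delicate bookkeeping point; the paper's own proof does not address it explicitly either.
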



\subsection{General formulas}

The main results in \S 1 give modular properties of the generating functions of the triangular numbers with coefficients and the mixed forms. In particular, from Theorems \ref{modular}, \ref{lt}, \ref{odd-triangular} and \corref{mixed-m} we know that the generating functions   
for $T_{\mathcal C}({\bf z})$, ${\mathcal M}_{l,t}({\bf x}, {\bf z})$, ${\mathcal M}_{s,t}({\bf y}, {\bf z})$ and ${\mathcal M}({\bf x}, {\bf y}, 
{\bf z})$ are all modular forms (upto a power of $q$) of integral weight for $\Gamma_0(N)$ with character $\chi$. The weight, level $N$ 
and character $\chi$ all depend on the coefficients chosen for the corresponding forms. Therefore, finding explicit basis for the space of modular forms of integral weight for some level and character is equivalent to getting explicit formulas for the corresponding representation 
numbers. We will elaborate this a little bit. 
Recall that the representation numbers corresponding to $T_{\mathcal C}({\bf z})$ and  ${\mathcal M}_{s,t}({\bf y}, {\bf z})$ are 
denoted respectively by $\delta_k({\mathcal C};n)$ and ${\mathcal N}_{s,t}(b_1^{\alpha_1}, \ldots, b_ v^{\alpha_v}; c_1^{e_1}, \ldots, c_k^{e_k}; n)$. We now give notations to the remaining two forms. Let ${\mathcal N}_{l,t}(a_1^{\beta_1}, \ldots, a_ u^{\beta_u}; c_1^{e_1}, \ldots, c_k^{e_k}; n)$ denote the representation number for the form ${\mathcal M}_{l,t}({\bf x}, {\bf z})$ and ${\mathcal N}(a_1^{\beta_1}, \ldots, a_ u^{\beta_u}; b_1^{\alpha_1}, \ldots, b_ v^{\alpha_v};c_1^{e_1}, \ldots, c_k^{e_k}; n)$ denote the representation 
number for the form ${\mathcal M}({\bf x}, {\bf y}, {\bf z})$, where $2u+v+k=2m$. Now, let ${\mathcal G}(\tau)$ be one of the generating functions ${\Psi}_{\mathcal C}(\tau)$, $\psi_{l,t}(\tau)$, $\psi_{s,t}(\tau)$ or ${\Phi}(\tau)$ (corresponding to the forms 
 $T_{\mathcal C}({\bf z})$, ${\mathcal M}_{l,t}({\bf x}, {\bf z})$, ${\mathcal M}_{s,t}({\bf y}, {\bf z})$ or ${\mathcal M}({\bf x}, {\bf y}, {\bf z})$ respectively). Then by Theorems \ref{modular}, \ref{lt}, \ref{odd-triangular} and \corref{mixed-m}, we see that $q^{{\bf h}/8} 
 {\mathcal G}(\tau)$ belongs to the space $M_m(\Gamma_0(N),\tilde{\chi})$, where $2u+v+k =2m$, 
 $$
 N = \begin{cases}
2\cdot lcm(c_1, c_2, \dots, c_k) & {\rm ~for~} {\Psi}_{\mathcal C}(\tau),\\
 lcm[3~ lcm(a_1, \ldots, a_u), 2~ lcm(c_1, \ldots, c_k)] & {\rm ~for~} \psi_{l,t}(\tau),\\
 lcm[4~ lcm(b_1, \ldots, b_v), 2~ lcm(c_1, \ldots, c_k)] & {\rm ~for~} \psi_{s,t}(\tau),\\
lcm[3 lcm(a_1, \ldots, a_u), lcm[4~ lcm(b_1, \ldots, b_v), 2~ lcm(c_1, \ldots, c_k)]]  & {\rm ~for~} \Phi(\tau).\\
\end{cases}
$$  
and 
$$
\tilde{\chi} = \begin{cases}
 \chi&  {\rm ~for~} {\Psi}_{\mathcal C}(\tau),\\
 \chi' &  {\rm ~for~} \psi_{l,t}(\tau),\\
\chi'' & {\rm ~for~} \psi_{s,t}(\tau),\\
\omega &{\rm ~for~} \Phi(\tau),\\
\end{cases}
$$
where $\chi$, $\chi'$, $\chi''$ and $\omega$ are as in Theorems \ref{modular}, \ref{lt}, \ref{odd-triangular} and \corref{mixed-m}, respectively and \\${\bf h} = (c_1 + \ldots +c_k)$. If $\{f_1, \ldots, f_{\nu_{m,N,\tilde{\chi}}}\}$ is a basis for the space $M_m(\Gamma_0(N), \tilde{\chi})$, then we can express $q^{{\bf h}/8} {\mathcal G}(\tau)$ in terms of this basis as follows. 
\begin{equation}\label{formula}
q^{{\bf h}/8} {\mathcal G}(\tau) = \sum_{i=1}^{\nu_{m,N,\tilde{\chi}}} t_i f_i(\tau).
\end{equation}
Let $p = {\bf h}/8$ and denote by $a_{f_i}(n)$, the $n$-th Fourier coefficient of the $i$-th basis element $f_i$. Then comparing the $n$-th 
Fourier coefficients in both the sides of \eqref{formula}, we get 
\begin{equation}\label{formula1}
 \sum_{i=1}^{\nu_{m,N,\tilde{\chi}}} t_i a_{f_i}(n) = \begin{cases}
\delta_k({\mathcal C};n-p) & {\rm ~for~} {\Psi}_{\mathcal C}(\tau),\\
{\mathcal N}_{l,t}(a_1^{\beta_1}, \ldots, a_ u^{\beta_u}; c_1^{e_1}, \ldots, c_k^{e_k}; n-p) &  {\rm ~for~} \psi_{l,t}(\tau),\\
{\mathcal N}_{s,t}(b_1^{\alpha_1}, \ldots, b_ v^{\alpha_v}; c_1^{e_1}, \ldots, c_k^{e_k}; n-p) &  {\rm ~for~} \psi_{s,t}(\tau),\\
 {\mathcal N}(a_1^{\beta_1}, \ldots, a_ u^{\beta_u}; b_1^{\alpha_1}, \ldots, b_ v^{\alpha_v};c_1^{e_1}, \ldots, c_k^{e_k}; n-p) &  {\rm ~for~} \Phi(\tau).\\
 \end{cases}
 \end{equation}



In \S 3, we use the above method to derive the 21 formulas obtained by Xia et al in \cite{xia}.

\smallskip

\subsubsection{ Sample and Explicit formulas}

In the above section we described the method to get explicit formulas for the representation numbers. 
Our method is illustrated with some specific formulas for the case where the level $N$ is a divisor of $24$ and the 
weight of the modular forms space is either $2$ or $3$. In \S 4, we give a list of forms for the purpose of explicit examples. Forms with 4 variables are given 
in Table 2(a) and forms with 6 variables are given in Table 2(b). In this section, we also provide explicit basis for the spaces $M_2(\Gamma_0(d),\chi)$ (Table A) and 
$M_3(\Gamma_0(d),\chi)$ (Table B), where $d\vert 24$. 
For the remaining cases listed in Tables 2(a) and 2(b), one can derive the explicit formulas using \eqref{formula1} and the 
coefficients ($t_i$'s) appearing  in Tables 3 to 14 (Appendix). 
We have used the open-source mathematics software SAGE (www.sagemath.org) for carrying out the above  calculations.

\bigskip

\bigskip

\section{Proofs of theorems}

Before we proceed to give proofs of our theorems, we shall present some known facts. 
For a positive integers $k$ and $M$, let $M_k(\Gamma_0(M),\psi)$ denote the ${\mathbb C}$-vector space of holomorphic modular forms of weight $k$ with respect to the congruence subgroup $\Gamma_0(M)$ with character $\psi$ (which is a Dirichlet character modulo $M$). The subspace of cusp forms is denoted by $S_k(\Gamma_0(M), \psi)$. Let $\eta(\tau) = q^{1/24}\prod_{n\ge 1} (1-q^n)$ be the Dedekind eta function, where $q = e^{2\pi i\tau}$, $\tau\in {\mathcal H}$, the complex upper half-plane. 
An eta-quotient is defined as $\prod_{\delta\vert M} \eta^{r_\delta}(\delta \tau)$, where the product varies over all the positive divisors $\delta$ of $M$ and $r_\delta$ are integers. The reason for naming this as an eta-quotient is the fact that some of the eta powers $r_\delta$ can be negative also. The following result gives necessary conditions for an eta-quotient to be a modular form of integral weight.\\

\smallskip

\noindent {\bf Theorem A} (Dummit, Kisilevsky and McKay \cite{dummit}) \\
{\em 
For $M \in {\mathbb N}$, let $r_\delta \in {\mathbb Z}$ for $\delta \vert M $. Let $f(\tau) = \displaystyle{\prod_{\delta \vert M}\eta^{r_\delta}(\delta \tau)}$ be an eta-quotient such that 
$k = \frac{1}{2}\displaystyle{\sum_{\delta \vert M} r_\delta}$ is a positive integer. If $f(\tau)$ satisfies the the following conditions 
\begin{enumerate}
\item[{(i)}]
$\displaystyle{\sum_{\delta \vert M}\delta r_\delta \equiv 0\pmod{24}}$ ~and ~
$\displaystyle{\sum_{\delta \vert M}\frac{M}{\delta} r_\delta \equiv 0 \pmod{24}}$,
\item[{(ii)}]
For each $ d\ge 1$, $\displaystyle{\sum_{\delta \vert M}\frac{\\gcd{(d,\delta)}^2 r_\delta}{\delta}} \geq 0$,
\end{enumerate}
then $f(\tau)$ is a modular form in $M_k(\Gamma_0(M),\psi)$, where the Dirichlet character $\psi$ is given by 
$\left(\frac{{(-1)^k}s}{\cdot}\right)$, where $s =\displaystyle{\prod_{\delta \vert M}\delta^{|r_\delta|}}$.  In addition to above 
conditions, if in (ii), the quantity is strictly positive for each $d\vert M$,  then $f(\tau)\in S_k(\Gamma_0(M),\psi)$.\\
}

\smallskip

We also make use of the following fact, which expresses the generating function for the triangular numbers (given by \eqref{gen}) 
as an eta-quotient (for details we refer to \cite[Proposition 1]{ono}). 
\begin{equation}\label{gen-eta}
\Psi(\tau) = \prod_{n=1}^{\infty} \frac{(1-q^{2n})^2}{(1-q^{n})}~
= q^{-1/8} \frac{\eta^{2}(2\tau)}{\eta(\tau)}.
\end{equation}

\bigskip

\subsection{Proof of \thmref{modular}}
We use the expression for the generating function $\Psi(\tau)$ in terms of eta-quotient and \eqref{gen-coeff} to prove this theorem 
with the help of Theorem A. Using \eqref{gen-coeff} and \eqref{gen-eta}, we have 
\begin{equation}
\Psi_{\mathcal C}(\tau)  = \prod_{i=1}^k \Psi(c_i \tau) ~
= q^{-{\bf h}/8} \prod_{i=1}^k \frac{\eta^2(2c_i \tau)}{\eta(c_i \tau)}.
\end{equation}
Therefore, $q^{{\bf h}/8} \Psi_{\mathcal C}(\tau)$ is the eta-quotient $\displaystyle{\prod_{i=1}^k 
\frac{\eta^2(2c_i \tau)}{\eta(c_i \tau)}}$. Now we use Theorem A to get the required modular property. It is easy to check that for condition (i) of Theorem A to satisfy, we need the property that ${\bf h} = c_1+\cdots+c_k \equiv 0\pmod{8}$. To check condition (ii), 
we note that the pairs $(\delta, r_\delta)$ are given by $(c_i, -1)$ and $(2c_i, 2)$, $1\le i\le k$. Therefore, we have 
\begin{equation*}
\sum_{\delta\vert N} \frac{\\gcd(d,\delta)^2 r_\delta}{d \delta}  = \sum_{i=1}^k \left(2 \frac{\\gcd(d,2c_i)^2}{d (2c_i)} - \frac{\\gcd(d,c_i)^2}{d c_i} \right).
\end{equation*}
Since for every $d\ge 1$, we have $\\gcd(d,2c_i) \ge gcd(d, c_i)$, the right-hand side of the above equation is non-negative, condition (ii) of Theorem A is satisfied for this eta-product. The weight and character parts can be checked easily. 
This completes the proof.

\subsection{Proof of \thmref{odd-triangular}}
Since $\theta(\tau) = \eta^5(2\tau)/(\eta^2(\tau)\eta^2(4\tau))$, using \eqref{gen;st} and \eqref{gen-eta}, we have
\begin{equation*}
\begin{split}
\psi_{s,t}(\tau) & =  \prod_{i=1}^{v} \theta(b_{i}\tau).\prod_{j=1}^{k} \Psi(c_{j}\tau) \\ & =\prod_{i=1}^v \frac{\eta^5(2b_i \tau)}{\eta^2(b_i \tau)\eta^2(4b_i \tau)} . q^{-{\bf h}/8} \prod_{j=1}^k \frac{\eta^2(2c_j \tau)}{\eta(c_j \tau)}  .
\end{split}
\end{equation*}
Therefore, we have 
\begin{equation} \label{sq-triang}
q^{{\bf h}/8} \psi_{s,t}(\tau) = \displaystyle{\prod_{i=1}^v \frac{\eta^5(2b_i \tau)}{\eta^2(b_i \tau)\eta^2(4b_i \tau)} \prod_{i=1}^k \frac{\eta^2(2c_i \tau)}{\eta(c_i \tau)}}.
\end{equation}

As in the proof of \thmref{modular}, we get the modular property of this eta-quotient by using Theorem A. 
To get condition (i), we need the property $c_1+\cdots+c_k \equiv 0\pmod{8}$. To check condition (ii), we use the following fact: \\
\noindent {\bf Fact}: For positive integers $d$ and $b$, we have 
$$
10 \gcd(d,2b)^{2} - 8 \gcd(d,b)^{2} -2 \gcd(d,4b)^{2} \ge 0.
$$
Write $d= 2^\alpha d_1$ and $b= 2^\beta b_1$, with $\alpha, \beta \ge 0$ and $2\not\vert d_1b_1$. If $S$ denotes the LHS of the 
above property, then it is easy to see that $S=0$ when $\alpha\le \beta$ and $\alpha> \beta+1$ and in the remaining case $S>0$.
This proves the fact and this gives the required condition (ii) of Theorem A for our eta-quotient. 
The weight and character of the resulting modular form can be obtained easily by using Theorem A. 
This completes the proof.

\smallskip

\smallskip

\subsection{Proof of \propref{odd-square}}

Let $n\ge 1$ be an integer represented by $T_{\mathcal C}({\bf z})$. Then we have 
\begin{equation*}
n  = \sum_{i=1}^{k} c_{i} \frac{z_{i}(z_{i}+1)}{2},
\end{equation*} 
and so $8n = 4\sum_{i=1}^k c_i z_i(z_i+1)$. Since ${\bf h} = c_1+\ldots + c_k$, we get  
\begin{eqnarray*}
8n + {\bf h} & =  \sum_{i=1}^{k} c_{i} (4x_i^2 + 4x_i+1) & = \sum_{i=1}^k c_i (2x_i+1)^2.
\end{eqnarray*}
Since one can trace back these identities, it follows that $\delta_{k}({\mathcal C};n)  = q_{k}({\mathcal C}; 8n+{\bf h})$.

\smallskip

\subsection{Proof of \propref{ellipsoid}}
Let $r$ be a positive integer and assume that the $k$-dimensional ellipsoid with axis lengths $r/\sqrt{c_{i}}$, $1\le i\le k$ centred at $(1/2, \ldots, 1/2)$ 
contains a lattice point $(z_{1}, z_{2}, \ldots, z_{k}) \in {\mathbb Z}^k$, then we have 
\begin{equation*}
r^2 ~ =~ \sum_{i=1}^{k} c_{i} (z_{i}-\frac{1}{2})^2 ~= \sum_{i=1}^{k} c_{i} [(z_{i}^2 - z_{i}) + 1/4] ~=  \sum_{i=1}^{k} c_{i} z_{i}(z_{i}-1) 
+ \frac{1}{4} \sum_{i=1}^{k} c_{i} ~=~\sum_{i=1}^{k} c_{i} z_{i}(z_{i}-1) + \frac{{\bf h}}{4}. \\
\end{equation*}
Therefore, $\frac{r^2}{2} -  \frac{{\bf h}}{8} = \sum_{i=1}^{k} c_{i} \frac{z_{i}(z_{i}-1)}{2}$. 
This shows that we have a representation of $\frac{r^2}{2}- \frac{{\bf h}}{8}$ as a sum of $k$ triangular numbers with coefficients $c_i$, $1\le i\le k$. Since $z_i$ and $(-z_i-1)$ represent the same triangular number, the total 
number of lattice points in this ellipsoid is equal to  $2^{k} \delta_{k}(\mathcal{C};[\frac{r^2}{2}-\frac{{\bf h}}{8}])$. 
Thus, we have the required formula mentioned in the proposition. 

\smallskip

\section{Formulas for the $21$ mixed forms and the $(p,k)$-parametrization of the Eisenstein series $E_4(d\tau)$, $d\vert 12$.} 

In this section, we first derive formulas for the 21 mixed forms considered in the work of Xia et. al \cite{xia} using our method. In \corref{mixed-m} we observed that the generating function $q^{{\bf h}/8} \Phi(\tau)$ corresponding to the mixed form ${\mathcal M}({\bf x}, {\bf y}, {\bf z})$ is a modular form of weight $u+(v+k)/2$ on some 
subgroup $\Gamma_0(N)$ with character $\omega$, depending on the coefficients involved in the mixed form. 
In the above, {\bf h} is the sum of the coefficients appearing in the triangular part of the mixed form.  For the $21$ cases, it turns out that all the generating 
functions lie in the space $M_4(\Gamma_0(12))$.  We use the following basis for the space $M_4(\Gamma_0(12))$ (obtained in \cite{rss-octonary}): 
$$
E_4(d\tau), d\vert 12; f_{4,6}(\tau), f_{4,6}(2\tau), f_{4,12}(\tau),
$$
where $E_4(\tau)$ is the normalized Eisenstein series of weight $4$ for $SL_2({\mathbb Z})$ and $f_{4,6}(\tau)$, $f_{4,12}(\tau)$ are 
the newforms of weight $4$ on $\Gamma_0(6)$ and $\Gamma_0(12)$ respectively, which are given explicitly in terms of the eta-functions as 
follows.  
\begin{equation*}
\begin{split}
f_{4,6}(\tau) & = \eta^2(\tau) \eta^2(2\tau) \eta^2(3\tau) \eta^2(6\tau), \\
f_{4,12}(\tau)& = \frac{\eta^2(2\tau) \eta^3(3\tau) \eta^3(4\tau)\eta^2(6\tau)}{\eta(\tau) \eta(12\tau)} - 
\frac{\eta^3(\tau) \eta^2(2\tau) \eta^2(6\tau)\eta^3(12\tau)}{\eta(3\tau) \eta(4\tau)}.
\end{split}
\end{equation*}
So, we express the generating functions $q^{{\bf h}/8} \Phi(\tau)$ as a linear combination of the above basis elements as 
\begin{equation}\label{21mixed}
q^{{\bf h}/8} \Phi(\tau) = \sum_{d\vert 12} \alpha_d E_4(d\tau) + c_1 f_{4,6}(\tau) + c_2 f_{4,6}(2\tau) + c_3 f_{4,12}(\tau).
\end{equation}

In the following table, we give the mixed forms and the corresponding coefficients $\alpha_d$, $d\vert 12$, $c_i$, $i=1,2,3$.

\smallskip

\newpage
\begin{center}

{\bf Table 1. Coefficients table for the 21 mixed forms}

\smallskip

\renewcommand{\arraystretch}{1.2}
\begin{tabularx}{\textwidth}{|p{5.25cm}| p{0.8cm}|p{0.8cm}|p{0.8cm}|p{0.8cm}|p{0.8cm}|p{0.8cm}|p{0.8cm}|p{0.8cm}|p{0.8cm}|}

\hline
     
                            \multicolumn{1}{|c|}{\textbf{Form}}  & \multicolumn{9}{|c|}{\textbf{Coefficients}}\\ \cline{2-10} 
{} & {$\alpha_1$} & {$\alpha_2$} & {$\alpha_3$} & {$\alpha_4$} & {$\alpha_6$} & {$\alpha_{12}$} & {$c_1$} & {$c_2$} & {$c_3$}  
     \\ \hline 
     
\endfirsthead
     \caption{Coefficients for the 21 mixed forms}\\
      \hline
                              \multicolumn{1}{|c|}{\textbf{Form}}  & \multicolumn{9}{|c|}{\textbf{Coefficients}}\\ \cline{2-10} 
    {} & {$\alpha_1$} & {$\alpha_2$} & {$\alpha_3$} & {$\alpha_4$} & {$\alpha_6$} & {$\alpha_{12}$} & {$c_1$} & {$c_2$} & {$c_3$}  
     \\ \hline 
     
\endhead
      \hline
      \multicolumn{10}{|r|}{{Continued on Next Page\ldots}}     \\ \hline
      
\endfoot

\endlastfoot
\hline

$ {\mathcal F}(2\tau)                           \theta^{3}(\tau) \theta^{3}(3\tau)                                              $ & $  \frac{1}{120} $  & $  0 $  & $  \frac{-3}{40} $  & $  \frac{-2}{15} $  & $  0 $  & $  \frac{6}{5} $  & $  0 $  & $  0 $  & $  4 $  \\ \hline 
$ q{\mathcal F}(2\tau)                           \theta^{2}(\tau) \theta^{2}(3\tau) \Psi(2\tau) \Psi(6\tau)                     $ & $  \frac{1}{480} $  & $  \frac{-1}{480} $  & $  \frac{-3}{160} $  & $  0 $  & $  \frac{3}{160} $  & $  0 $  & $  0 $  & $  0 $  & $  \frac{1}{2} $  \\ \hline 
$ q^{2}{\mathcal F}(2\tau)                           \theta(\tau) \theta(3\tau) \Psi^{2}(2\tau) \Psi^{2}(6\tau)                 $ & $  \frac{1}{1920} $  & $  \frac{-1}{1920} $  & $  \frac{-3}{640} $  & $  0 $  & $  \frac{3}{640} $  & $  0 $  & $  0 $  & $  0 $  & $  \frac{-1}{8} $  \\ \hline 
$ q^{2}{\mathcal F}(4\tau)                           \theta^{2}(\tau)                   \Psi(\tau)  \Psi(3\tau) \Psi^{2}(6\tau) $ & $  \frac{1}{1920} $  & $  \frac{-1}{1920} $  & $  \frac{-3}{640} $  & $  0 $  & $  \frac{3}{640} $  & $  0 $  & $  \frac{-1}{2} $  & $  -1 $  & $  \frac{3}{8} $  \\ \hline 
$ q^{2}{\mathcal F}(4\tau)                           \theta^{2}(3\tau)                  \Psi(\tau)  \Psi^{2}(2\tau) \Psi(3\tau) $ & $  \frac{1}{1920} $  & $  \frac{-1}{1920} $  & $  \frac{-3}{640} $  & $  0 $  & $  \frac{3}{640} $  & $  0 $  & $  \frac{1}{2} $  & $  1 $  & $  \frac{3}{8} $  \\ \hline 
$ {\mathcal F}(\tau)  {\mathcal F}(2\tau)  \theta^{4}(3\tau)                                                                    $ & $  \frac{1}{300} $  & $  \frac{-1}{200} $  & $  \frac{13}{100} $  & $  \frac{2}{75} $  & $  \frac{-39}{200} $  & $  \frac{26}{25} $  & $  \frac{16}{5} $  & $  \frac{32}{5} $  & $  2 $  \\ \hline 
$ {\mathcal F}(\tau)  {\mathcal F}(2\tau)  \theta^{2}(\tau) \theta^{2}(3\tau)                                                   $ & $  \frac{1}{60} $  & $  \frac{-1}{120} $  & $  \frac{-3}{20} $  & $  \frac{-2}{15} $  & $  \frac{3}{40} $  & $  \frac{6}{5} $  & $  0 $  & $  0 $  & $  6 $  \\ \hline 
$ {\mathcal F}(\tau)  {\mathcal F}(2\tau)  \theta^{4}(\tau)                                                                     $ & $  \frac{13}{300} $  & $  \frac{-13}{200} $  & $  \frac{9}{100} $  & $  \frac{26}{75} $  & $  \frac{-27}{200} $  & $  \frac{18}{25} $  & $  \frac{48}{5} $  & $  \frac{96}{5} $  & $  -6 $  \\ \hline 
$ q{\mathcal F}(\tau)  {\mathcal F}(2\tau)                                     \Psi^{2}(\tau) \Psi^{2}(3\tau)                   $ & $  \frac{1}{240} $  & $  \frac{-1}{240} $  & $  \frac{-3}{80} $  & $  0 $  & $  \frac{3}{80} $  & $  0 $  & $  0 $  & $  0 $  & $  0 $  \\ \hline 
$ q^{2}{\mathcal F}(\tau)  {\mathcal F}(2\tau)                                     \Psi^{2}(2\tau) \Psi^{2}(6\tau)              $ & $  \frac{1}{640} $  & $  \frac{-19}{1920} $  & $  \frac{-9}{640} $  & $  \frac{1}{120} $  & $  \frac{57}{640} $  & $  \frac{-3}{40} $  & $  0 $  & $  0 $  & $  \frac{-3}{8} $  \\ \hline 
$ {\mathcal F}(2\tau) {\mathcal F}(4\tau) \theta^{2}(\tau) \theta^{2}(3\tau)                                                    $ & $  \frac{1}{240} $  & $  \frac{1}{240} $  & $  \frac{-3}{80} $  & $  \frac{-2}{15} $  & $  \frac{-3}{80} $  & $  \frac{6}{5} $  & $  0 $  & $  0 $  & $  3 $  \\ \hline 
$ q{\mathcal F}(2\tau) {\mathcal F}(4\tau)                                    \Psi^{2}(\tau)\Psi^{2}(3\tau)                     $ & $  \frac{1}{960} $  & $  \frac{-1}{960} $  & $  \frac{-3}{320} $  & $  0 $  & $  \frac{3}{320} $  & $  0 $  & $  0 $  & $  0 $  & $  \frac{3}{4} $  \\ \hline 
$ {\mathcal F}^{3}(2\tau)                          \theta(\tau) \theta(3\tau)                                                   $ & $  \frac{1}{120} $  & $  0 $  & $  \frac{-3}{40} $  & $  \frac{-2}{15} $  & $  0 $  & $  \frac{6}{5} $  & $  0 $  & $  0 $  & $  0 $  \\ \hline 
$ q{\mathcal F}^{3}(2\tau)                                                              \Psi(2\tau) \Psi(6\tau)                 $ & $ \frac{1}{240}$  & $ \frac{-3}{80}$  & $ \frac{-3}{80}$  & $ \frac{1}{30}$  & $ \frac{27}{80}$  & $\frac{-3}{10}$  & $ 0$  & $ 0$  & $ 0$  \\  \hline
$ {\mathcal F}^{3}(4\tau)                        \theta(\tau) \theta(3\tau)                                                     $ & $  \frac{1}{1200} $  & $  \frac{-3}{400} $  & $  \frac{3}{400} $  & $  \frac{8}{75} $  & $  \frac{-27}{400} $  & $  \frac{24}{25} $  & $  \frac{9}{5} $  & $  \frac{18}{5} $  & $  0 $  \\ \hline 
$ {\mathcal F}^{2}(\tau) {\mathcal F}(2\tau) \theta(\tau) \theta(3\tau)                                                         $ & $  \frac{1}{30} $  & $  \frac{-1}{40} $  & $  \frac{-3}{10} $  & $  \frac{-2}{15} $  & $  \frac{9}{40} $  & $  \frac{6}{5} $  & $  0 $  & $  0 $  & $  6 $  \\ \hline 
$ q{\mathcal F}^{2}(\tau) {\mathcal F}(2\tau)                                       \Psi(2\tau) \Psi(6\tau)                     $ & $  \frac{1}{96} $  & $  \frac{-7}{160} $  & $  \frac{-3}{32} $  & $  \frac{1}{30} $  & $  \frac{63}{160} $  & $  \frac{-3}{10} $  & $  0 $  & $  0 $  & $  \frac{-3}{2} $  \\ \hline 
$ {\mathcal F}(\tau) {\mathcal F}^{2}(2\tau)  \theta(\tau) \theta(3\tau)                                                        $ & $  \frac{1}{75} $  & $  \frac{-1}{50} $  & $  \frac{3}{25} $  & $  \frac{8}{75} $  & $  \frac{-9}{50} $  & $  \frac{24}{25} $  & $  \frac{24}{5} $  & $  \frac{48}{5} $  & $  0 $  \\ \hline 
$ {\mathcal F}(2\tau){\mathcal F}^{2}(4\tau)  \theta(\tau) \theta(3\tau)                                                        $ & $  \frac{1}{480} $  & $  \frac{1}{160} $  & $  \frac{-3}{160} $  & $  \frac{-2}{15} $  & $  \frac{-9}{160} $  & $  \frac{6}{5} $  & $  0 $  & $  0 $  & $  \frac{3}{2} $  \\ \hline 
$ {\mathcal F}(\tau) {\mathcal F}(2\tau) {\mathcal F}(4\tau) \theta(\tau) \theta(3\tau)                                         $ & $  \frac{1}{120} $  & $  0 $  & $  \frac{-3}{40} $  & $  \frac{-2}{15} $  & $  0 $  & $  \frac{6}{5} $  & $  0 $  & $  0 $  & $  6 $  \\ \hline 
$ q{\mathcal F}(\tau) {\mathcal F}(2\tau) {\mathcal F}(4\tau) \Psi(2\tau) \Psi(6\tau)                                           $ & $  \frac{1}{960} $  & $  \frac{1}{64} $  & $  \frac{-3}{320} $  & $  \frac{-1}{60} $  & $  \frac{-9}{64} $  & $  \frac{3}{20} $  & $  0 $  & $  0 $  & $  \frac{3}{4} $  \\ \hline 

\end{tabularx}
\end{center}

By comparing the $n$-th Fourier coefficients of the expressions in \eqref{21mixed}, we obtain the following formulas: 

\begin{equation*}
\begin{split}
{\mathcal N}_{l,s}(2^{1}             ; 1^{3} 3^{3}                     ; n   ) &  = 2\sigma_{3}(n)  -  18\sigma_{3}(n/3) -  32\sigma_{3}(n/4)  +  288\sigma_{3}(n/12)   +  4a_{4,12}(n), \\
{\mathcal N}_{l,s,t}( 2^{1}           ; 1^{2} 3^{2} ; 2^{1} 6^{1}      ; n-1 ) &  = \frac{1}{2}\sigma_{3}(n) -    \frac{1}{2}\sigma_{3}(n/2) -    \frac{9}{2}\sigma_{3}(n/3)  +  \frac{9}{2}\sigma_{3}(n/6)    +  \frac{1}{2}a_{4,12}(n), \\
{\mathcal N}_{l,s,t}( 2^{1}           ; 1^{1} 3^{1} ; 2^{2} 6^{2}      ; n-2 ) &  = \frac{1}{8}\sigma_{3}(n) -    \frac{1}{8}\sigma_{3}(n/2) -    \frac{9}{8}\sigma_{3}(n/3)  +  \frac{9}{8}\sigma_{3}(n/6)    -    \frac{1}{8}a_{4,12}(n), \\
{\mathcal N}_{l,s,t}( 4^{1}           ; 1^{2}       ; 1^{1} 3^{1} 6^{2}; n-2 ) &  = \frac{1}{8}\sigma_{3}(n) -    \frac{1}{8}\sigma_{3}(n/2) -    \frac{9}{8}\sigma_{3}(n/3)  +  \frac{9}{8}\sigma_{3}(n/6)  -    \frac{1}{2}a_{4,6}(n)\\
& \quad  -  a_{4,6}(n/2) +  \frac{3}{8}a_{4,12}(n), \\
{\mathcal N}_{l,s,t}( 4^{1}           ; 3^{2}       ; 1^{1} 2^{2} 3^{1}; n-1 ) &  = \frac{1}{8}\sigma_{3}(n) -    \frac{1}{8}\sigma_{3}(n/2) -    \frac{9}{8}\sigma_{3}(n/3)  +  \frac{9}{8}\sigma_{3}(n/6)  +  \frac{1}{2}a_{4,6}(n)\\ 
& \quad +  a_{4,6}(n/2) +  \frac{3}{8}a_{4,12}(n), \\
{\mathcal N}_{l,s}( 1^{1} 2^{1}       ; 3^{4}                          ; n   ) &  = \frac{4}{5}\sigma_{3}(n) -    \frac{6}{5}\sigma_{3}(n/2) +  \frac{156}{5}\sigma_{3}(n/3) +  \frac{32}{5}\sigma_{3}(n/4) -    \frac{234}{5}\sigma_{3}(n/6) \\ 
& \quad +  \frac{1248}{5}\sigma_{3}(n/12)   + \frac{16}{5}a_{4,6}(n) +  \frac{32}{5}a_{4,6}(n/2) +  2a_{4,12}(n), \\
\end{split}
\end{equation*}
\begin{equation*}
\begin{split}
{\mathcal N}_{l,s}( 1^{1} 2^{1}       ; 1^{2} 3^{2}                    ; n   ) &  = 4\sigma_{3}(n) -  2\sigma_{3}(n/2) -  36\sigma_{3}(n/3) -  32\sigma_{3}(n/4) +  18\sigma_{3}(n/6) \\& \quad +  288\sigma_{3}(n/12)  +  6a_{4,12}(n), \\
{\mathcal N}_{l,s}( 1^{1} 2^{1}       ; 1^{4}                          ; n  )  &  =  \frac{52}{5}\sigma_{3}(n) -    \frac{78}{5}\sigma_{3}(n/2) +  \frac{108}{5}\sigma_{3}(n/3) +  \frac{416}{5}\sigma_{3}(n/4) -    \frac{162}{5}\sigma_{3}(n/6)  \\ 
& \quad + \frac{864}{5}\sigma_{3}(n/12) +   \frac{48}{5}a_{4,6}(n) +  \frac{96}{5}a_{4,6}(n/2) -  6a_{4,12}(n), \\
{\mathcal N}_{l,t}( 1^{1} 2^{1}                     ; 1^{2} 3^{2}      ; n-1 ) &  = \sigma_{3}(n) -  \sigma_{3}(n/2) -  9\sigma_{3}(n/3)  +  9\sigma_{3}(n/6),     \\
{\mathcal N}_{l,t}( 1^{1} 2^{1}                     ; 2^{2} 6^{2}      ; n-2 ) &  = \frac{3}{8}\sigma_{3}(n) -    \frac{19}{8}\sigma_{3}(n/2) -    \frac{27}{8}\sigma_{3}(n/3) +  2\sigma_{3}(n/4) +  \frac{171}{8}\sigma_{3}(n/6) \\ 
& \quad -  18\sigma_{3}(n/12)  -    \frac{3}{8}a_{4,12}(n), \\
{\mathcal N}_{l,s}( 2^{1} 4^{1}       ; 1^{2} 3^{2}                    ; n  )  &  =  \sigma_{3}(n) +  \sigma_{3}(n/2) -  9\sigma_{3}(n/3) -  32\sigma_{3}(n/4) -  9\sigma_{3}(n/6) \\& \quad +  288\sigma_{3}(n/12)   +  3a_{4,12}(n), \\
{\mathcal N}_{l,t}( 2^{1} 4^{1}                     ; 1^{2}.3^{2}      ; n-1)  &  =  \frac{1}{4}\sigma_{3}(n) -    \frac{1}{4}\sigma_{3}(n/2) -    \frac{9}{4}\sigma_{3}(n/3)  +  \frac{9}{4}\sigma_{3}(n/6)    +  \frac{3}{4}a_{4,12}(n), \\
{\mathcal N}_{l,s}( 2^{3}             ; 1^{1} 3^{1}                    ; n  )  &  =  2\sigma_{3}(n)  -  18\sigma_{3}(n/3) -  32\sigma_{3}(n/4)  +  288\sigma_{3}(n/12),    \\
{\mathcal N}_{l,t}( 2^{3}                           ; 2^{1} 6^{1}      ; n-1)  &  =  \sigma_{3}(n) -  9\sigma_{3}(n/2) -  9\sigma_{3}(n/3) +  8\sigma_{3}(n/4) +  81\sigma_{3}(n/6) -  72\sigma_{3}(n/12),    \\
{\mathcal N}_{l,s}( 4^{3}             ; 1^{1} 3^{1}                    ; n  )  &  =  \frac{1}{5}\sigma_{3}(n) -    \frac{9}{5}\sigma_{3}(n/2) +  \frac{9}{5}\sigma_{3}(n/3) +  \frac{128}{5}\sigma_{3}(n/4) -    \frac{81}{5}\sigma_{3}(n/6) \\ 
& \quad +  \frac{1152}{5}\sigma_{3}(n/12)  +  \frac{9}{5}a_{4,6}(n) +  \frac{18}{5}a_{4,6}(n/2),  \\
{\mathcal N}_{l,s}( 1^{2} 2^{1}       ; 1^{1} 3^{1}                    ; n  )  &  =  8\sigma_{3}(n) -  6\sigma_{3}(n/2) -  72\sigma_{3}(n/3) -  32\sigma_{3}(n/4) +  54\sigma_{3}(n/6) \\& \quad +  288\sigma_{3}(n/12)    +  6a_{4,12}(n), \\
{\mathcal N}_{l,t}( 1^{2} 2^{1}                     ; 2^{1} 6^{1}      ; n-1)  &  =  \frac{5}{2}\sigma_{3}(n) -    \frac{21}{2}\sigma_{3}(n/2) -    \frac{45}{2}\sigma_{3}(n/3) +  8\sigma_{3}(n/4) +  \frac{189}{2}\sigma_{3}(n/6)\\ 
& \quad  -  72\sigma_{3}(n/12)   -    \frac{3}{2}a_{4,12}(n), \\
{\mathcal N}_{l,s}( 1^{1} 2^{2}       ; 1^{1} 3^{1}                    ; n  )  &  =  \frac{16}{5}\sigma_{3}(n) -    \frac{24}{5}\sigma_{3}(n/2) +  \frac{144}{5}\sigma_{3}(n/3) +  \frac{128}{5}\sigma_{3}(n/4) -    \frac{216}{5}\sigma_{3}(n/6)\\ 
& \quad +  \frac{1152}{5}\sigma_{3}(n/12) +  \frac{24}{5}a_{4,6}(n) +  \frac{48}{5}a_{4,6}(n/2),  \\
{\mathcal N}_{l,s}( 2^{1} 4^{2}       ; 1^{1} 3^{1}                    ; n  )  &  =  \frac{1}{2}\sigma_{3}(n) +  \frac{3}{2}\sigma_{3}(n/2) -    \frac{9}{2}\sigma_{3}(n/3) -  32\sigma_{3}(n/4) -    \frac{27}{2}\sigma_{3}(n/6)\\
& \quad +  288\sigma_{3}(n/12)   +  \frac{3}{2}a_{4,12}(n), \\
{\mathcal N}_{l,s}(1^{1} 2^{1} 4^{1} ; 1^{1} 3^{1}                     ; n   ) &  = 2\sigma_{3}(n)  -  18\sigma_{3}(n/3) -  32\sigma_{3}(n/4)  +  288\sigma_{3}(n/12)   +  6a_{4,12}(n), \\
{\mathcal N}_{l,t}( 1^{1} 2^{1}.4^{1}                ; 2^{1} 6^{1}     ; n-1)  &  =  \frac{1}{4}\sigma_{3}(n) +  \frac{15}{4}\sigma_{3}(n/2) -    \frac{9}{4}\sigma_{3}(n/3) -  4\sigma_{3}(n/4) \\
& \quad -    \frac{135}{4}\sigma_{3}(n/6) +  36\sigma_{3}(n/12)   +  \frac{3}{4}a_{4,12}(n). \\
\end{split}
\end{equation*}
\begin{rmk}
The above formulas are exactly the same as in Theorem 1.1 of \cite{xia}. 
The space of cusp forms $S_4(\Gamma_0(12))$ is three dimensional spanned by $f_{4,6}(\tau), f_{4,6}(2\tau)$ and $f_{4,12}(\tau)$. 
However, in \cite[Theorem 3.1]{xia}, they make use of only two cusp forms for getting the required expressions. We observe that the two cusp 
forms appear in \cite[Theorem 3.1]{xia}, denoted by $G(\tau)$ and $H(\tau)$, can be given in terms of our basis elements as follows.

\begin{equation}\label{gh}
\begin{split}
G(\tau)& =  -\frac{1}{6} f_{4,6}(\tau) -\frac{1}{3} f_{4,6}(2\tau) + \frac{1}{6} f_{4,12}(\tau) \\
H(\tau)& =  \frac{1}{2} f_{4,6}(\tau) + f_{4,6}(2\tau) + \frac{1}{2} f_{4,12}(\tau). 
\end{split}
\end{equation}
\end{rmk}


We now make the following observation about obtaining representations of $E_4(\tau)$ and its duplications in terms of $p,k$ using our identities 
\eqref{21mixed}. \\

To get the identities given by \eqref{21mixed} (with $G(\tau), H(\tau)$ in place of $f_{4,6}(\tau), f_{4,6}(2\tau), f_{4,12}(\tau)$), Xia et. al used the method of $(p,k)$-parametrization. 
Let 
\begin{equation*}
 \begin{split}
  p & =  p(\tau): =  \frac{\theta^2(\tau)-\theta^2(3\tau)}{2 \theta^2(3\tau)},{~~~~~~~~~~~~~~}
  k  = k(\tau): =  \frac{\theta^3(3\tau)}{\theta(\tau)}. \\
 \end{split}
\end{equation*}
In \cite[Theorems 1,2,4]{aaw2}, $(p,k)$-parametrizations of ${\mathcal F}(d\tau)$, $d=1,2,4$ are given and in \cite[(2.3)]{aalw1} the parametrization is 
given for the theta series $\theta(\tau)$ and $\theta(3\tau)$. Further, in \cite{aaw1}, the representations of $q^{j/24}\prod_{n=1}^\infty(1-q^{nj})$, 
$j=1,2,3,4,6,12$ in terms of $p$ and $k$ have been established. In \cite{aw}, Alaca and Williams derived the representations of $E_4(d\tau)$, $d=1,2,3,4,6,12$ in terms of $p$ and $k$. In \cite{xia}, all the above $(p,k)$-parametrizations were used to get the identity \eqref{21mixed} for all 
the 21 mixed forms. Since we establish \eqref{21mixed} using the theory of modular forms, we use these identities along with the $(p,k)$-parametrizations obtained in   \cite[Theorems 1,2,4]{aaw2}, \cite[(2.3)]{aalw1} and \cite{aaw1} (for the generating functions ${\mathcal F}(\tau)$, 
$\theta(\tau)$ and $\Psi(\tau)$) to derive the representations of $E_4(\tau)$ and its duplications in terms of $p$ and $k$. 
Using the $(p,k)$-parametrization for the left-hand side generating functions in \eqref{21mixed}, we get a system of equations involving 
$E_4(d\tau)$, $d\vert 12$, $f_{4,6}(\ell\tau)$, $\ell =1,2$ and $f_{4,12}(\tau)$ and the functions $p$ and $k$, from which we derive the 
required representations of $E_4(d\tau)$, $d\vert 12$, $f_{4,6}(\ell\tau)$, $\ell =1,2$ and $f_{4,12}(\tau)$ in terms of $p, k$, which we give below. (We have used Mathematica software for doing these computations.) 

\begin{equation*}
 \begin{split}
E_{4}(\tau) & =  (1 + 124 p + 964 p^{2} + 2788 p^{3} + 3910 p^{4} + 2788 p^{5} + 964 p^{6} + 124 p^{7} + p^{8}       ) k^{4}, \\
E_{4}(2\tau) & =  (1 + 4 p + 64 p^{2} + 178 p^{3} + 235 p^{4} + 178 p^{5} + 64 p^{6} + 4 p^{7} + p^{8}       ) k^{4}, \\
E_{4}(3\tau) & =  (1 + 4 p + 4 p^{2} + 28 p^{3} + 70 p^{4} + 28 p^{5} + 4 p^{6} + 4 p^{7} + p^{8}          ) k^{4}, \\
E_{4}(4\tau) & =  (1 + 4 p + 4 p^{2} - 2 p^{3} + 10 p^{4} + 28 p^{5} + \frac{31}{4} p^{6} - \frac{29}{4} p^{7} + \frac{1}{16}p^{8}) k^{4}, \\
E_{4}(6\tau) & =  (1 + 4 p + 4 p^{2} - 2 p^{3} - 5 p^{4} - 2 p^{5} + 4 p^{6} + 4 p^{7} + p^{8}             ) k^{4},  \\
E_{4}(12\tau) & =  (1 + 4 p + 4 p^{2} - 2 p^{3} - 5 p^{4} - 2 p^{5} + \frac{1}{4} p^{6} + \frac{1}{4} p^{7} + \frac{1}{16}p^{8}         ) k^{4}, \\
f_{4,6}(\tau) & = (-1 - 4 p -\frac{119}{32}  p^{2} + \frac{115}{32} p^{3} - \frac{913}{128} p^{4} - \frac{1695}{64} p^{5} - \frac{2049}{256} p^{6} + \frac{1801}{256} p^{7} -\frac{1}{16} p^{8}) k^{4},\\
f_{4,6}(2\tau) & =(\frac{1}{2} + \frac{9}{4} p + \frac{175}{64}p^{2} -\frac{83}{64}  p^{3} +\frac{673}{256} p^{4} + \frac{1583}{128} p^{5} + \frac{2081}{512} p^{6} - \frac{1737}{512} p^{7} +\frac{1}{32} p^{8})k^{4},\\
f_{4,12}(\tau) & =(\frac{1}{2} p + \frac{7}{4} p^{2} + \frac{7}{4} p^{3} - \frac{7}{4} p^{5} - \frac{7}{4} p^{6} -\frac{1}{2} p^{7})k^{4}. \\
\end{split}
\end{equation*}
Now using \eqref{gh}, we obtain representations of $G(\tau)$ and $H(\tau)$ in terms of $p,k$, which is given below. \\

\begin{equation*}
\begin{split}
G(\tau) & =   \frac{2 p^{3} + 5 p^{4} - 5 p^{6} - 2 p^{7}}{16} k^{4} ~=~  \frac{p^3(1-p)(1+ p)(1+2p)(2+p)}{16} k^{4},\\
H(\tau) & = \frac{8 p + 28 p^{2} + 22 p^{3} - 15 p^{4} - 28 p^{5} - 13 p^{6} - 2 p^{7}}{16} k^{4}~=~ \frac{p(1-p)(1+p)(1+2p)(2+p)^3}{16} k^{4}. \\
\end{split}
\end{equation*}
\smallskip

\section{Explicit formulas in the case of 4 and 6 variables}

In this section, we give a list of forms in $4$ and $6$ variables for the purpose of providing explicit examples for our main results. 
Since the number of examples is too big, we present a few sample formulas in each case and the other formulas are expressed in terms 
of linear combination of basis elements (of the respective vector space of modular forms) and the linear combination coefficients are listed in  
tabular format at the end of this paper. \\

\smallskip

\centerline{\bf {Table 2(a). Forms with 4 variables}}

\bigskip

Recall that $T_{\mathcal C}$ denotes the sum of triangular numbers with coefficients in ${\mathcal C}$, given by \eqref{triang} and 
${\mathcal M}_{s,t}$, ${\mathcal M}_{l,t}$ are the mixed forms given by the equation \eqref{st-lt}. \\
{\tiny
\begin{center}


\smallskip



\end{center}
}

\subsection{Explicit bases for the space $M_{k}(\Gamma_0(N),\chi)$, $k = 2,3$, $N=6,8,12,24$.}

In order to give explicit examples for the cases given in Tables 2 and 3, we shall provide explicit  bases for the spaces of modular forms 
$M_{k}(\Gamma_0(N), \chi)$ for $k=2,3$, and  $N=6,8,12,24$. 

\smallskip

\noindent 
The following functions are used to construct the explicit bases for the spaces of modular forms.

For an integer $k \ge 4,$ let $E_k$ denote the normalized Eisenstein series of weight $k$ for the full modular group $SL_2({\mathbb Z})$, 
given by 
$$
E_k(\tau) = 1 - \frac{2k}{B_k}\sum_{n\ge 1} \sigma_{k-1}(n) q^n,
$$
where $q=e^{2 i\pi \tau}$,$\tau \in {\mathbb H}$, $\sigma_r(n) = \sum_{d\vert n} d^{r}$ and $B_k$ is the $k$-th Bernoulli number defined by following identitty $\displaystyle{\frac{x}{e^x-1} = \sum_{m=0}^\infty \frac{B_m}{m!} x^m}$.
The above Fourier expansion is also valid for $k=2$ and is given by 
$$
E_2(\tau) = 1 - 24 \sum_{n\ge 1} \sigma(n) q^n
$$
and it is not a modular form but a quasimodular form of weight $2$. However, one can construct weight 2 modular forms using $E_2(\tau)$, 
which is defined as follows. For a given natural number $N$, let $a, b$ be positive divisors of $N$ with $a< b$. Then the following function is a 
modular form of weight $2$ on $\Gamma_0({\rm lcm}[a,b])$, with trivial character:
\begin{equation}\label{phiab}
\phi_{a,b}(\tau) := \frac{b E_{2}(b\tau)- a E_{2}(a\tau)}{b-a}. 
\end{equation}

\smallskip

Apart from these Eisenstein series, we also use the following generalized Eisenstein series. Suppose that $\chi$ and $\psi$ are primitive Dirichlet characters with conductors $M$ and $N$, respectively. For a positive integer $k$,  let 
\begin{equation}\label{eisenstein}
E_{k,\chi,\psi}(\tau) :=  - \frac{B_{k,\psi}}{2k} ~ \delta_{M,1} + \sum_{n\ge 1}\left(\sum_{d\vert n} \psi(d) \cdot \chi(n/d) d^{k-1}\right) q^n,
\end{equation}
where $\delta_{m,n}$ is the Kronecker delta function and 
$B_{k,\psi}$ is the  generalized Bernoulli number with respect to the character $\psi$. 
Then, the Eisenstein series $E_{k,\chi,\psi}(\tau)$ belongs to the space $M_k(\Gamma_0(MN), \chi \psi)$, provided $\chi(-1)\psi(-1) = (-1)^k$ 
and $MN\not=1$. When $\chi=\psi =1$ (i.e., when $M=N=1$) and $k\ge 4$, we have $E_{k,\chi,\psi}(\tau) = -\frac{B_k}{2k} E_k(\tau)$, the normalized Eisenstein series of integer weight $k$ as defined before.
For more details on the construction of these Eisenstein series, we refer to \cite{{miyake}, {stein}}. We denote the inner sum in \eqref{eisenstein} by 
$\sigma_{k-1;\chi,\psi}(n)$. i.e.,
\begin{equation}\label{divisor}
\sigma_{k-1;\chi,\psi}(n) := \sum_{d\vert n} \psi(d) \cdot \chi(n/d) d^{k-1}.
\end{equation}

\smallskip

An eta-quotient is defined as a finite product of integer powers of $\eta(\tau)$ where, $\eta(\tau)=q^{1/24} \prod_{n\ge1}(1-q^n)$ is the Dedekind eta function  and we denote it as follows. 
\begin{equation}\label{eta-q} 
\delta_{1}^{r_{\delta_{1}}} \delta_{2}^{r_{\delta_{2}}} \cdots \delta_{s}^{r_{\delta_{s}}} := \prod_{i=1}^s \eta^{r_{\delta_{i}}}(\delta_{i} \tau ),
\end{equation}
where $\delta_{i}$'s are positive integers and $r_{\delta_{i}}$'s are non-zero integers.

\bigskip

From the basic facts of the theory of modular forms, we have the following decomposition with respect to peterson inner product
$$
M_k(\Gamma_0(N), \chi) = {\mathcal E}_k(\Gamma_0(N), \chi) \oplus S_k(\Gamma_0(N), \chi),
$$
where, ${\mathcal E}_k(\Gamma_0(N),\chi)$ and $S_k(\Gamma_0(N),\chi)$ denote the the space generated by Eisenstein series and space of cusp forms respectively. For a given weight $k$, level $N$ with character $\chi$, we denote by $\nu_{k,N,\chi}$ the dimension of the vector space $M_k(\Gamma_0(N),\chi)$. We write a general basis for $M_k(\Gamma_0(N),\chi)$ as $f_{k,N,\chi;j}(\tau)$,
where $1\le j\le \nu_{k,N,\chi}$. In the basis, we first list the basis elements for the space of Eisenstein series ${\mathcal E}_k(\Gamma_0(N),\chi)$, followed by the basis elements of $S_k(\Gamma_0(N),\chi)$.
Denoting by $e_{k,N,\chi}$ and $s_{k,N,\chi}$ the dimensions of the vector spaces ${\mathcal E}_k(\Gamma_0(N),\chi)$ and $S_k(\Gamma_0(N),\chi)$ respectively, we have $\nu_{k,N,\chi} =  e_{k,N,\chi} + s_{k,N,\chi}$. 

\bigskip

The following tables give bases for the vector spaces $M_2(\Gamma_0(N),\chi)$ and $M_3(\Gamma_0(N),\chi)$. 
The character $\chi_0$ denotes the principal character modulo the respective level $N$ and $\chi_m$ denotes the Kronecker symbol $\left(\frac{m}{\cdot}\right)$. 


\begin{center}

\textbf{Table A. Basis for $M_{2}(\Gamma_0(N),\chi)$}\\

\bigskip

\begin{tabular}{|p{1.75 cm }|p{0.8 cm}|p{0.7 cm }|p{6cm}|p{4.75 cm }|}

\hline

{\textbf{ Space}} & \multicolumn{2}{|c|}{\textbf{Dimension}} & 
 \multicolumn{2}{|c|}{\textbf{Basis for  $M_2(\Gamma_0(N),\chi)$}}\\ \cline{2-5}
&&&&\\
 {$(N,\chi)$}     & $e_{2,N,\chi}$ & $s_{2,N,\chi}$  & {Basis for $ {\mathcal E}_2(\Gamma_0(N),\chi)$ }& {Basis for $S_2(\Gamma_0(N),\chi)$} \\
 
\hline 
 $(6,\chi_{0})$   & 3   &  0   &  $\{ \phi_{1,b}, b\vert 6, b \neq 1\}$               &  \{0\}  \\
                           
\hline                     
 $(8,\chi_{0})$   &  3  &  0   &   $\{ \phi_{1,b}, b|8, b \neq 1 \}$               & \{0\} \\
                           
\hline                     
 $(8,\chi_{8})$   &  2  &  0   & $\{E_{2,{\bf 1},\chi_{8}}(\tau)$, $ E_{2,\chi_{8},{\bf 1}}(\tau)\}$ & \{0\} \\
                           
\hline                     
 $(12,\chi_{0})$  &  5  &  0   & $\{ \phi_{1,b}, b|12, b \neq 1\}$   & \{0\}  \\
                                    
\hline                     
 $(12,\chi_{12})$ &  4  &  0  & $\{E_{2, {\bf 1}, \chi_{12}}(\tau),  E_{2, \chi_{12}, {\bf 1}}(\tau), $ $ E_{2, \chi_{4}, \chi_{3}}(\tau),  E_{2, \chi_{3}, \chi_{4}}(\tau)\}$ & \{0\} \\

\hline                     
 $(24,\chi_{0})$  &  7  &  1   & $\{ \phi_{1,b}, b|24, b \neq 1\}$ & $\{\Delta_{2, 24, \chi_{0}}(\tau) \}$ \\
                           
\hline                     
 $(24,\chi_{8})$  &  4  &  2   & $\{E_{2, {\bf 1}, \chi_{8}}(az), a|3;$ $E_{2, \chi_{8}, {\bf 1}}(bz), b|3\}$ & $\{\Delta_{2, 24, \chi_{8};1}(\tau),  
 \Delta_{2, 24, \chi_{8};2}(\tau)\}$ \\
                           
 \hline                     
 $(24,\chi_{12})$ &  8  &  0  & $\{E_{2, {\bf 1}, \chi_{12}}(az), a|2;  E_{2, \chi_{12}, {\bf 1}}(bz), b|2;$ $E_{2, \chi_{4}, \chi_{3}}(cz), c|2;  
 E_{2, \chi_{3}, \chi_{4}}(dz), d|2\}$ & \{0\} \\
                           
\hline                     
 $(24,\chi_{24})$ &  4  &  2   & $\{E_{2, {\bf 1}, \chi_{24}}(\tau),  E_{2, \chi_{24}, {\bf 1}}(\tau), $ $ E_{2, \chi_{8}, \chi_{3}}(\tau),  E_{2, \chi_{3}, \chi_{8}}(\tau)\}$ & $\{\Delta_{2, 24, \chi_{24};1}(\tau), \Delta_{2, 24, \chi_{24};2}(\tau)\}$ \\
 
\hline 
\end{tabular}
\end{center}
\smallskip

\begin{center}

\textbf{Table B. Basis for $M_{3}(\Gamma_0(N),\chi)$}

\smallskip

\begin{tabular}{|p{1.5 cm }|p{0.7 cm}|p{0.7 cm }|p{5.9cm}|p{5.9 cm }|}

\hline

{\textbf{Space}} & \multicolumn{2}{|c|}{\textbf{Dimension}} & 
 \multicolumn{2}{|c|}{\textbf{Basis for  $M_3(\Gamma_0(N),\chi)$}}\\ \cline{2-5}
 &&&&\\
 {$(N,\chi)$} & $e_{3,N,\chi}$ & $s_{3,N,\chi}$  & {Basis for $ {\mathcal E}_3(\Gamma_0(N),\chi)$ }& {Basis for $S_3(\Gamma_0(N),\chi)$}
  \\ \hline 
 $(4,\chi_{-4})$    &   2 & 0       & $\{E_{3, {\bf 1}, \chi_{-4}}(\tau),  E_{3, \chi_{-4}, {\bf 1}}(\tau)\}$                   &  $\{0\}$  \\
\hline                                                                                                                   
 $(3,\chi_{-3})$    &   2 & 0       & $\{E_{3, {\bf 1}, \chi_{-3}}(\tau),  E_{3, \chi_{-3}, {\bf 1}}(\tau)\}$                   & $\{0\}$ \\
                                                                                                                         
 \hline                                                                                                                  
 $(6,\chi_{-3})$    &   4 & 0       & $\{E_{3, {\bf 1}, \chi_{-3}}(az), a|2;   E_{3, \chi_{-3}, {\bf 1}}(bz), b|2\}$      & $\{0\}$ \\
\hline                                                                                                                   
$(8,\chi_{-4})$     &   4 & 0       & $\{E_{3, {\bf 1}, \chi_{-4}}(az), a|2; $ $ E_{3, \chi_{-4}, {\bf 1}}(bz), b|2\}$    & $\{0\}$  \\
                                                                                                                         
\hline                              
$(8,\chi_{-8})$     &   2 & 1       & $\{E_{3, {\bf 1}, \chi_{-8}}(\tau),$ $ E_{3, \chi_{-8}, {\bf 1}}(\tau)\}$                 & $\{\Delta_{3, 8, \chi_{-8}}(\tau)\}$ \\
                                    
\hline                              
$(12,\chi_{-3})$    &   6 & 1       & $\{E_{3, {\bf 1}, \chi_{-3}}(az), a|4; $ $ E_{3, \chi_{-3}, {\bf 1}}(bz), b|4\}$    & $\{\Delta_{3, 12, \chi_{-3}}(\tau) \}$ \\
                                                                                                                          
\hline                                                                                                                    
$(12,\chi_{-4})$    &   4 & 2       & $\{E_{3, {\bf 1}, \chi_{-4}}(az), a|3; $ $ E_{3, \chi_{-4}, {\bf 1}}(bz), b|3\}$    & $\{ \Delta_{3, 12, \chi_{-4};1}(\tau),\Delta_{3, 12, \chi_{-4};2}(\tau) \}$ \\
                                                                                                                          
\hline                                                                                                                    
$(24,\chi_{-3})$    &   8 & 4       & $\{E_{3, {\bf 1}, \chi_{-3}}(az), a|8;$ $ E_{3, \chi_{-3}, {\bf 1}}(bz), b|8\}$    & $\{\Delta_{3, 12, \chi_{-3}}(az), a|2; \Delta_{3, 24, \chi_{-3};s}(\tau)$, \\
\hline                                                                                                                    
$(24,\chi_{-4})$    &   8 & 4       & $\{E_{3, {\bf 1}, \chi_{-4}}(az), a|6;$ $  E_{3, \chi_{-4}, {\bf 1}}(bz), b|6\}$    & $\{ \Delta_{3, 12, \chi_{-4};1}(az), a|2$; 
$\Delta_{3, 12, \chi_{-4};2}(bz), b|2 \}$ \\
\hline                                                                                                                    
$(24,\chi_{-8})$    &   4 & 6       & $\{E_{3, {\bf 1}, \chi_{-8}}(az); a|3,$ $  E_{3, \chi_{-8}, {\bf 1}}(bz); b|3\}$    & $\{\Delta_{3, 8, \chi_{-8}}(az), a|3; $ $   \Delta_{3, 24, \chi_{-8};s}(\tau); 1 \leq s \leq 4 \}$ \\
\hline                                                                                                                    
$(24,\chi_{-24})$   &   4 & 6       & $\{E_{3, {\bf 1}, \chi_{-24}}(\tau),  E_{3, \chi_{-24}, {\bf 1}}(\tau), $ $ E_{3, \chi_{-3}, \chi_{8}}(\tau),  E_{3, \chi_{8}, \chi_{-3}}(\tau)\}$ & $\{\Delta_{3, 24, \chi_{-24};r}(\tau); 1 \leq r \leq 6 \}$ \\
\hline 
\end{tabular}
\end{center}

%

\bigskip

\newpage

In the above tables, we use the following eta-quotients for the basis of cusp forms.

\smallskip

\begin{center}
\begin{tabular}{ll}
$\Delta_{2,24,\chi_{0}}(\tau) = 2^{1}4^{1}6^{1}12^{1}$ ,  &\\
$\Delta_{2, 24, \chi_{8};1}(\tau) = 1^{1}2^{-1}3^{-1}6^{4}8^{2}12^{-1}$ , & $\Delta_{2, 24, \chi_{8};2}(\tau)   =  1^{2}4^{-1}6^{-1}8^{1}12^{4}24^{-1}$ ,  \\
$\Delta_{2, 24, \chi_{24};1}(\tau) =  1^{1}2^{-1}3^{-1}4^{1}6^{4}12^{-2}24^{2}$ , & $ \Delta_{2, 24, \chi_{24};2}(\tau)   =  1^{2}2^{-2}4^{4}6^{1}8^{-1}12^{-1}24^{1} $ ,   \\ 
 $ \Delta_{3,8,\chi_{-8}}(\tau)        =  1^2 2^1 4^{1} 8^2 ,                               $ & $ \Delta_{3,12,\chi_{-3}}(\tau)       =  2^{3}6^{3} ,                                     $  \\
 $ \Delta_{3,12,\chi_{-4};1}(\tau)     =  1^{4}2^{-1}4^{1}6^{1}12^{1} ,                     $ & $ \Delta_{3,12,\chi_{-4};2}(\tau)     =  1^{1}2^{1}3^{1}6^{-1}12^{4},                     $  \\                                                                  
 $ \Delta_{3,24,\chi_{-3};1}(\tau)     =  1^{3}2^{1}3^{-1}4^{4}6^{1}8^{-3}24^{1} ,          $ & $ \Delta_{3,24,\chi_{-3};2}(\tau)     =  1^{-3}2^{4}3^{1}4^{1}8^{3}12^{1}24^{-1} ,        $  \\                      
 $ \Delta_{3, 24, \chi_{-8};1}(\tau)   =  1^{-2}2^{4}4^{4}6^{1}8^{-2}12^{1} ,               $ & $ \Delta_{3, 24, \chi_{-8};2}(\tau)   =  1^{2}4^{3}6^{3}8^{-1}12^{-2}24^{1},              $  \\
 $ \Delta_{3, 24, \chi_{-8};3}(\tau)   = 2^{3}3^{2}4^{-2}8^{1}12^{3}24^{-1},                $ & $ \Delta_{3, 24, \chi_{-8};4}(\tau)   =  1^{1}2^{1}3^{-1}4^{1}6^{2}8^{1}12^{2}24^{-1},    $  \\         
 $ \Delta_{3, 24, \chi_{-24};1}(\tau)  =  1^{-3}2^{9}3^{-1}4^{-3}6^{4}12^{-2}24^{2} ,       $ & $ \Delta_{3, 24, \chi_{-24};2}(\tau)   =  1^{-2}2^{8}6^{1}8^{-1}12^{-1}24^{1},            $  \\
 $ \Delta_{3, 24, \chi_{-24};3}(\tau)  =  1^{1}2^{-5}3^{-1}4^{11}6^{4}8^{-4}12^{-2}24^{2} , $ & $ \Delta_{3, 24, \chi_{-24};4}(\tau)   =  1^{2}2^{-6}4^{14}6^{1}8^{-5}12^{-1}24^{1} ,     $  \\
 $ \Delta_{3, 24, \chi_{-24};5}(\tau)  =  1^{1}2^{-1}3^{-5}4^{1}6^{14}12^{-6}24^{2},        $ & $ \Delta_{3, 24, \chi_{-24};6}(\tau)   =  1^{2}2^{-2}3^{-4}4^{4}6^{11}8^{-1}12^{-5}24^{1}.$  \\
 \end{tabular}
\end{center}

 \smallskip
 
The Fourier expansions of the above forms are written as follows:
\begin{equation*}
\Delta_{k,N,\chi}(\tau) ~=~ \sum_{n\ge 1} \tau_{k,N,\chi}(n) q^n, \qquad  \Delta_{k,N,\chi;j}(\tau) ~=~ \sum_{n\ge 1} \tau_{k,N,\chi;j}(n) q^n.
\end{equation*}

\smallskip

\subsection{Sample formulas (4 variables)}

\begin{equation*}
\begin{split}
\delta_4(1^{2} 3^{2}        ;n-1)                   & =                  - \sigma(n) + \sigma(n/2) + 3 \sigma(n/3) -3 \sigma(n/6),  \\ 
\delta_4(2^{4}              ;n-1)                   & =                  -  \sigma(n)  +   3  \sigma(n/2)  -2  \sigma(n/4),      \\ 
\delta_4( 1^{2} 2^{1} 4^{1} ;n-1)                   & =  \sigma_{1; \chi_{8},{\bf 1}}(n),  \\
{\mathcal N}_{s,t}(1^{1} 2^{1} ; 4^{2}       ;n-1)  & =  \sigma_{1; \chi_{8},{\bf 1}}(n),  \\  
{\mathcal N}_{s,t}( 1^{1}      ;2^{2}4^{1}   ;n-1)  & =  \sigma_{1; \chi_{8},{\bf 1}}(n),  \\  
{\mathcal N}_{s,t}(1^{2}       ; 4^{2}       ;n-1 ) & = -  \sigma(n)  -  \sigma(n/2)  +   10  \sigma(n/4)  -8  \sigma(n/8),    \\   
{\mathcal N}_{s,t}(2^{2}       ; 4^{2}       ;n-1)  & = -  \sigma(n)  +   3  \sigma(n/2)  -2  \sigma(n/4),      \\   
{\mathcal N}_{s,t}(2^{1}       ; 2^{2}4^{1}  ;n-1)  & = -  \sigma(n)  +   3  \sigma(n/2)  -2  \sigma(n/4),     \\   
{\mathcal N}_{l,t}(1^{1} ; 2^{1} 6^{1} ;n-1)        & = -  \sigma(n)    -   3  \sigma(n/2)     +   3  \sigma(n/3)    +   4  \sigma(n/4)     +   9  \sigma(n/6)    -   12  \sigma(n/12),    \\   
{\mathcal N}_{l,t}(2^{1} ; 2^{1} 6^{1} ;n-1)        & = -  \sigma(n)    +   3  \sigma(n/2)     -   3  \sigma(n/3)    -   2  \sigma(n/4)    +   9  \sigma(n/6)    -   6  \sigma(n/12).    \\   
\end{split}
\end{equation*}

\subsection{Sample fromulas (6 variables)}

\begin{equation*}
\begin{split}
\delta_6(1^4 2^2               ;n-1)  &   =  \sigma_{2;\chi_{-4},1}(n), \\
\delta_6(1^{5} 3^{1}           ;n-1)  &  =  -\frac{1}{8}\sigma_{2;{\bf 1},\chi_{-3}}(n) + \frac{1}{8}\sigma_{2;{\bf 1},\chi_{-3}}(n/2) + \frac{9}{8}\sigma_{2;\chi_{-3},{\bf 1}}(n) + \frac{9}{8}\sigma_{2;\chi_{-3},{\bf 1}}(n/2),   \\
\delta_6(1^{1} 3^{5}           ;n-1)  &  =  -\frac{1}{8}\sigma_{2;{\bf 1},\chi_{-3}}(n) + \frac{1}{8}\sigma_{2;{\bf 1},\chi_{-3}}(n/2) + \frac{1}{8}\sigma_{2;\chi_{-3},{\bf 1}}(n) + \frac{1}{8}\sigma_{2;\chi_{-3},{\bf 1}}(n/2),   \\
\delta_6(2^4 4^2               ;n-2)  &  =   \sigma_{2;\chi_{-4},{\bf 1}}(n/2), \\
\delta_6(4^6                   ;n-3)  &  =  -\frac{1}{16}\sigma_{2;{\bf 1},\chi_{-4}}(n)  + \frac{1}{16}   \sigma_{2;{\bf 1},\chi_{-4}}(n/2)  + \frac{1}{16}    \sigma_{2;\chi_{-4},{\bf 1}}(n)  -\frac{1}{4}   \sigma_{2;\chi_{-4},{\bf 1}}(n/2),\\
\delta_6(1^{2} 2^{1} 4^{3}     ;n-2)  &  =   \frac{1}{6} \sigma_{2;\chi_{-8},{\bf 1}}(n) - \frac{1}{6} f_{3,8,\chi_{-8}}(n),  \\
{\mathcal N}_{l,t}(1^{1}      ;1^{2} 3^{2}      ;n-1)     & =  -\frac{1}{2}\sigma_{2;{\bf 1},\chi_{-3}}(n) + \frac{1}{2}\sigma_{2;{\bf 1},\chi_{-3}}(n/2) + \frac{3}{2}\sigma_{2;\chi_{-3},{\bf 1}}(n) + \frac{3}{2}\sigma_{2;\chi_{-3},{\bf 1}}(n/2),    \\
{\mathcal N}_{l,t}(2^{1}      ;1^{2} 3^{2}      ;n-1)     & =  \frac{1}{4}\sigma_{2;{\bf 1},\chi_{-3}}(n)  - \frac{1}{4}\sigma_{2;{\bf 1},\chi_{-3}}(n/2) + \frac{3}{4}\sigma_{2;\chi_{-3},{\bf 1}}(n) + \frac{3}{4}\sigma_{2;\chi_{-3},{\bf 1}}(n/2),    \\ 
\end{split}
\end{equation*}
\begin{equation*}
\begin{split}
{\mathcal N}_{s,t}(1^2         ;2^4                ;n-1)  & =   \sigma_{2;\chi_{-4},1}(n), \\
{\mathcal N}_{s,t}(1^{4}       ;4^2                ;n-1)  & =  -\sigma_{2;{\bf 1},\chi_{-4}}(n)  +   \sigma_{2;{\bf 1},\chi_{-4}}(n/2)  + 2    \sigma_{2;\chi_{-4},{\bf 1}}(n),    \\
{\mathcal N}_{s,t}(2^{4}       ;4^2                ;n-1)  & =  \sigma_{2;{\bf 1},\chi_{-4}}(n)     \sigma_{2;{\bf 1},\chi_{-4}}(n/2)     + \sigma_{2;\chi_{-4},{\bf 1}}(n)   - 4  \sigma_{2;\chi_{-4},{\bf 1}}(n/2), \\  
{\mathcal N}_{s,t}(1^{2} 2^{2} ;4^2                ;n-1)  & =  \sigma_{2;\chi_{-4},{\bf 1}}(n),                                                                                                                  \\      
{\mathcal N}_{s,t}(1^{2}       ;4^4                ;n-2)  & =  -\frac{1}{4}\sigma_{2;{\bf 1},\chi_{-4}}(n)   + \frac{1}{4}  \sigma_{2;{\bf 1},\chi_{-4}}(n/2)    + \frac{1}{4}  \sigma_{2;\chi_{-4},{\bf 1}}(n), \\      
{\mathcal N}_{s,t}(2^{2}       ;4^4                ;n-2)  & =   \sigma_{2;\chi_{-4},{\bf 1}}(n/2),                                                \\                                                                     
{\mathcal N}_{s,t}(2^{2}       ;2^4                ;n-1)  & =   \sigma_{2;\chi_{-4},{\bf 1}}(n)  -4    \sigma_{2;\chi_{-4},{\bf 1}}(n/2),         \\                                                                     
{\mathcal N}_{s,t}(1^{1} 2^{1} ;1^2 2^{1} 4^{1}    ;n-1)  & =   \sigma_{2;\chi_{-4},{\bf 1}}(n),                                                  \\  
{\mathcal N}_{s,t}(1^{1} 2^{3} ;4^{2}              ;n-1)  & =   \frac{2}{3} \sigma_{2;\chi_{-8},{\bf 1}}(n) + \frac{1}{3} a_{3,8,\chi_{-8}} (n),  \\
{\mathcal N}_{s,t}(1^{3} 2^{1} ;4^{2}              ;n-1)  & =   \frac{4}{3} \sigma_{2;\chi_{-8},{\bf 1}}(n) - \frac{1}{3} a_{3,8,\chi_{-8}} (n),  \\
{\mathcal N}_{s,t}(1^{2}       ;1^{2} 2^{1} 4^{1}  ;n-1)  & =   \frac{4}{3} \sigma_{2;\chi_{-8},{\bf 1}}(n) - \frac{1}{3} a_{3,8,\chi_{-8}} (n),  \\
\end{split}
\end{equation*}
\begin{equation*}
\begin{split}
{\mathcal N}_{s,t}(2^{2}       ;1^{2} 2^{1} 4^{1}  ;n-1)  & =   \frac{2}{3} \sigma_{2;\chi_{-8},{\bf 1}}(n) + \frac{1}{3} a_{3,8,\chi_{-8}} (n),  \\
{\mathcal N}_{s,t}(1^{1} 2^{1} ;2^{4}              ;n-1)  & =   \frac{2}{3} \sigma_{2;\chi_{-8},{\bf 1}}(n) + \frac{1}{3} a_{3,8,\chi_{-8}} (n),  \\
{\mathcal N}_{s,t}(1^{1} 2^{1} ;4^{4}              ;n-2)  & =   \frac{1}{6} \sigma_{2;\chi_{-8},{\bf 1}}(n) - \frac{1}{6} a_{3,8,\chi_{-8}}(n),  \\
{\mathcal N}_{s,t}(1^{2} 2^{1}; 2^{2} 4^{1}    ;n-1)      & = \sigma_{2;\chi_{-4},{\bf 1}}(n),                                       \\
{\mathcal N}_{s,t}( 2^{3}; 2^{2} 4^{1}         ;n-1)      & = \sigma_{2;\chi_{-4},{\bf 1}}(n) - 4 \sigma_{2;\chi_{-4},{\bf 1}}(n/2), \\ 
{\mathcal N}_{s,t}( 1^{3}; 2^{2} 4^{1}         ;n-1)      & = \frac{4}{3} \sigma_{2;\chi_{-8},{\bf 1}}(n) - \frac{1}{3} a_{3,8,\chi_{-8}} (n), \\
{\mathcal N}_{s,t}(1^{1} 2^{2}; 2^{2} 4^{1}    ;n-1)      & = \frac{2}{3} \sigma_{2;\chi_{-8},{\bf 1}}(n) + \frac{1}{3} a_{3,8,\chi_{-8}} (n), \\
{\mathcal N}_{l,s,t}(1^{1} ; 1^{1}3^{1}  ; 2^{1}6^{1} ;n-1)   &  = - \frac{1}{2}  \sigma_{2;{\bf 1 },\chi_{-3}}(n) +   \frac{1}{2}   \sigma_{2;{\bf 1 },\chi_{-3}}(n/2) +       \frac{3}{2}   \sigma_{2;\chi_{-3},{\bf 1 }}(n)  +   \frac{3}{2}   \sigma_{2;\chi_{-3},{\bf 1 }}(n/2),     \\ 
{\mathcal N}_{l,s,t}(1^{1} ; 1^{2}       ; 2^{1}6^{1} ;n-1)   &  =    \frac{16}{7}   \sigma_{2;\chi_{-4},{\bf 1 }}(n) +  \frac{72}{7}    \sigma_{2;\chi_{-4},{\bf 1 }}(n/3) - \frac{9}{7}   \tau_{3,12,\chi_{-4};1} (n) - \frac{30}{7}  \tau_{3,12,\chi_{-4};2} (n),  \\ 
{\mathcal N}_{l,s,t}(1^{1} ; 1^{2}       ; 4^{2}      ;n-1)   &  =  \frac{1}{4}    \sigma_{2;{\bf 1 },\chi_{-3}}(n) - \frac{1}{2}    \sigma_{2;{\bf 1 },\chi_{-3}}(n/2) - \frac{7}{4}   \sigma_{2;{\bf 1 },\chi_{-3}}(n/4) +  2   \sigma_{2;{\bf 1 },\chi_{-3}}(n/8)              \\ & \quad   +  \frac{9}{4}      \sigma_{2;\chi_{-3},{\bf 1 }}(n)   +  \frac{9}{2}      \sigma_{2;\chi_{-3},{\bf 1 }}(n/2) - \frac{117}{4}   \sigma_{2;\chi_{-3},{\bf 1 }}(n/4) + 90    \sigma_{2;\chi_{-3},{\bf 1 }}(n/8)      \\ & \quad   -3             \tau_{3,12,\chi_{-3}} (n)  + \frac{9}{2}    \tau_{3,12,\chi_{-3}} (n/2) +  \frac{3}{2}   \tau_{3,24,\chi_{-3};1} (n), \\ 
{\mathcal N}_{l,s,t}(8^{1} ; 1^{2}       ; 2^{1}6^{1} ;n-1)   &  =  \frac{5}{14}  \sigma_{2;\chi_{-4},{\bf 1 }}(n)  - \frac{9}{14}  \sigma_{2;\chi_{-4},{\bf 1 }}(n/3) +  \frac{9}{14}   \tau_{3,12,\chi_{-4};1} (n) - \frac{6}{7}   \tau_{3,12,\chi_{-4};2} (n)       \\ & \quad   +  6             \tau_{3,12,\chi_{-4};3} (n)      +  18            \tau_{3,12,\chi_{-4};4} (n),   \\                                                                                                                  
{\mathcal N}_{l,s,t}(1^{1} ; 1^{1}6^{1}  ; 4^{2}      ;n-1)   &  =    \frac{32}{39}   \sigma_{2;\chi_{-8},{\bf 1 }}(n) +    \frac{24}{13}    \sigma_{2;\chi_{-8},{\bf 1 }}(n/3) - \frac{47}{39}   \tau_{3,8,\chi_{-8}} (n)      - \frac{6}{13}    \tau_{3,8,\chi_{-8}} (n/3) \\ & \quad     +    \frac{14}{13}    \tau_{3,24,\chi_{-8};1} (n) +    \frac{21}{26}   \tau_{3,24,\chi_{-8};2} (n) +    \frac{33}{26}    \tau_{3,24,\chi_{-8};3} (n) - \frac{23}{13}  \tau_{3,24,\chi_{-8};4} (n),     \\  
{\mathcal N}_{l,s,t}(1^{1} ; 1^{1}2^{1}  ; 4^{2}      ;n-1)   &  =   \frac{36}{23}    \sigma_{2;\chi_{-24},{\bf 1 }}(n) - \frac{4}{23}   \sigma_{2;\chi_{-3},\chi_{8}}(n) - \frac{51}{92}    \tau_{3,24,\chi_{-24};1} (n) +     \frac{3}{46}      \tau_{3,24,\chi_{-24};2} (n) \\ & \quad      +     \frac{51}{23}    \tau_{3,24,\chi_{-24};3} (n) +     \frac{15}{23}    \tau_{3,24,\chi_{-24};4} (n) - \frac{153}{92}    \tau_{3,24,\chi_{-24};5} (n) - \frac{51}{46}   \tau_{3,24,\chi_{-24};6} (n).  \\  
\end{split}
\end{equation*}

\bigskip

\section{Appendix}

As mentioned in \S 1.1.1, to get explicit formulas given by \eqref{formula1} for the list of forms in Tables 2(a) and 2(b), we use 
the following tables. These tables (Tables 3 to 14) give the required coefficients `$t_i$' that appear in the formulas \eqref{formula1}.

\smallskip

\subsection{Tabular format (4 variables)}

We provide the sample formula for representation number in tables as follows.

\begin{center}

\renewcommand{\arraystretch}{1.2}


\end{center}

\end{document}